\newcommand{\bb}{{\mathbbm{1}}}
\def\eps{\varepsilon}
\def\d{{\rm d}}
\def\R {\mathbb{R}}
\def\O{\Omega}
\newtheorem{proposition}{Proposition}[section]
\newtheorem{theorem}[proposition]{Theorem}
\newtheorem{corollary}[proposition]{Corollary}
\newtheorem{lemma}[proposition]{Lemma}
\theoremstyle{definition}
\numberwithin{equation}{section}
\newcommand{\UUU}{\color{black}}
\newcommand{\EEE}{\color{black}}
\newcommand{\MMM}{\color{black}}
\newcommand{\AAA}{\color{black}}
\newcommand{\AAB}{\color{black}}
\newcommand{\BBB}{\color{black}}
\newcommand{\tr}{\mathrm{tr}\,}
\newcommand{\cof}{\mathrm{cof\,}}
\newcommand{\MK}{\color{black}}
\newcommand{\EM}{\color{black}}
\newcommand{\KKK}{\color{black}}
\newcommand{\EOR}{\color{black}}
\newcommand{\n}{\mathbf n}
\newcommand{\om}{\Omega}
\newcommand{\dv}{\mathrm{div}\,}
\newenvironment{proofad}{\removelastskip\par\medskip   
\noindent{\bf Proof of { {\bf Theorem \ref{mainth1}}}.}
\rm}{\penalty-20\null\hfill$\square$\par\medbreak} 
\newenvironment{proofad2}{\removelastskip\par\medskip   
\noindent{\bf Proof of { {\bf Theorem \ref{main2}}}.}
\rm}{\penalty-20\null\hfill$\square$\par\medbreak} 
\newenvironment{proofad4}{\removelastskip\par\medskip   
\noindent{\bf Proof of { {\bf Theorem \ref{main3}}}.}
\rm}{\penalty-20\null\hfill$\square$\par\medbreak} 
\newenvironment{proofad3}{\removelastskip\par\medskip   
\noindent{\bf Proof of { {\bf Corollary \ref{fandi}}}.}
\rm}{\penalty-20\null\hfill$\square$\par\medbreak} 
\def \no#1#2#3 {{\bf #1} (#3), #2.}
\def \eds#1#2#3 {#1, #2, #3.}
\title[Linearization of finite elasticity with surface tension]
{Linearization of finite elasticity with surface tension}
\author{Martin Kru\v{z}\'ik}
\address[Martin Kru\v{z}\'ik]{
Czech Academy of Sciences, Institute of Information Theory and Automation,
Pod vod\' arenskou ve\v z\' \i\ 4, 182 08, Prague 8, Czech Republic and Faculty of Civil Engineering, Czech Technical University, Th\'{a}kurova 7, 166 29, Prague 6, Czech Republic.}
\email{kruzik@utia.cas.cz}
\author{Edoardo Mainini}
\address[Edoardo Mainini]{Dipartimento di Ingegneria meccanica, energetica, gestionale e dei trasporti, 
  Universit\`a  degli studi di Genova, Via all'Opera Pia, 15 - 16145 Genova Italy.}
\email{edoardo.mainini@unige.it}
\subjclass[2010]{\UUU 74G25, 
49J45,
}
\keywords{Hyperelasticity, linear elasticity, interface measure, variational methods, Gamma-convergence.}
\begin{document}
\begin{abstract}
\MK 
We propose    models in nonlinear elasticity for nonsimple materials that include  surface energy terms. Additionally, we also discuss  living surface loads   on the boundary.  \EM We establish corresponding linearized models  and show their relationship to the original ones  by means of  $\Gamma$-convergence. 
\EOR
\end{abstract}
\maketitle
%
%
%
\section{Introduction}\label{intro}

Surfaces of elastic  bodies  exhibit properties that are different from those associated with the
bulk. This behavior is caused either by the fact that the boundary
of the material is exposed to fatigue, chemical processes, coating, etc., thus
obviously resulting in very  different properties in comparatively thin boundary layers, or due to the fact that the atomic bonds
are broken at the surface of the body. These effects can be  phenomenologically  modeled in terms of boundaries equipped with
their own stored  energy density and they  have been well studied in the literature. Rational continuum mechanics approach to elastic surfaces 
has started with the work of Gurtin and Murdoch \cite{Gurtin75, G} and of  Steigmann and Ogden \cite{Steigmann-Ogden}. \AAB General energies for modeling body-environment interactions and surface loading are discussed in \cite{PG,PGVC}. \EEE  More recently, 
\v{S}ilhav\'{y} introduced a concept of interfacial quasiconvexity and polyconvexity \cite{S1, S2, S3} extending  \cite{F} to establish the existence of minimizers for problems with multi-well bulk energy density for simple materials, i.e., depending only on the first deformation gradient. Javili and Steinmann \cite{Javili-Steinmann-I, Javili-Steinmann-II} designed finite-element methods to model surface elasticity. Thermomechanical approach to interfacial and surface energetics of materials is reviewed in \cite{Javili, Steinmann-Haesner}. Surface-substrate interactions for shells is described in \cite{S4} and a phase-field modeling approach to the problem can be found e.g.~in \cite{Levitas14b}.

Rigorous derivation  of linear models from nonlinear elasticity by $\Gamma$-convergence \cite{DalMaso93} has  started by a pioneering work \cite{DMNP} and then various finer results appeared \cite{ADMDS,MPTJOTA,MPTARMA,MP2,MPsharp, MP, MOP, MM} 
that derive linear elasticity under various constraints as, e.g., incompressibility, or no Dirichlet boundary conditions, i.e., pure traction problems.  We also  refer to \cite{mora-riva} where the authors performed the linearization procedure for a pressure load, i.e., living load (a follower force) that depends on the deformation.  
\AAB
The study \cite{Dias}  explores the macroscopic elastic properties of elastomers with spherical cavities filled with pressurized liquid, considering the effects of surface tension. It starts by linearizing a fully nonlinear model and progresses with analyzing how the presence of multiple liquid-filled cavities enhances the elastic behavior in the linearized model. \EEE

 Our contribution focuses on the linearization of nonlinear elasticity models  with surface energy terms. \MK In order to fix ideas, we perform the analysis on the model functional $\mathcal{G}$ below  that  penalizes local changes of the surface area in the deformed configuration in analogy with the  incompressibility constraint in the bulk. \EOR
The main results are given in Theorem \ref{mainth1}  and Theorem \ref{main2},  stating the convergence of  minimizers of nonlinear models to the unique minimizer of the associated  linearized models, in case of Dirichlet boundary conditions and in case of only Neumann boundary conditions, respectively.
In the remainder of this section, we introduce the necessary notation and the functionals. The main results are then stated in Section~\ref{Sec:Results} and Sections~\ref{Sec:Proof1} and \ref{Sec:Proof2} are devoted to proofs.

\subsection*{\AAB Modeling of elastic surface energy\BBB}
Let $\Omega\subset \mathbb R^d$, $d\ge 2$, be a bounded open connected Lipschitz set, representing the reference configuration of a hyperelastic  body.
Let us introduce the energy functional depending on the deformation field $y:\Omega\to\mathbb R^d$
\[
\mathcal G(y):=\int_\Omega \left(W(\nabla y(x))+H(\nabla^2 y(x))\right)dx -\overline{\mathcal L}(y(x)-x)+\gamma\AAA
\left\|\,|\cof\nabla y\, \n|-1\,\right\|_{L^q(\partial\Omega)}^q.\BBB
\]
The global energy includes the stored elastic energy of a nonsimple material, as it depends on second gradient \cite{Toupin:62, Toupin:64}.  Here, $\overline{\mathcal L}$ is a dead loading functional, accounting for the work of given external force fields. It is a linear functional acting on the displacement field $y(x)-x$. 
Moreover, local changes in the surface measure of the boundary of $\Omega$ are penalized, as $|\cof\nabla y\,\mathbf n|$ represents the density of the surface area element of the deformed configuration:  the energy includes the \AAA $L^q(\partial\Omega)$ distance, $q\ge 1$, \BBB to its reference value corresponding to $y(x)=x$, being $\gamma>0$ a surface tension coefficient.

In the linearization process, we view $y$ as a perturbation of the identity map so that we write it as $y(x)=x+\eps v(x)$ for a suitable rescaled displacement $v$ \MK and we set $\mathcal{L}=\overline{\mathcal{L}}/\varepsilon$, which reflects the scaling of external forces.      Assuming \EM that $H$ and $W$ are frame indifferent, that  $H$ is convex positively $p$-homogeneous for some suitable $p>1$ and that $W$ is minimized at the identity with   \EOR $W(\mathbf I)=D W(\mathbf I)=0$, i.e., that  identity is the natural state of the body $\Omega$,   we consider the rescaled nonlinear global energy \EOR
\begin{equation}\begin{aligned}\label{Geps}
\mathcal G_\eps(v)&:=\frac1{\eps^2}\int_\Omega W(\mathbf I+\eps\nabla v)\,dx+ \MK \frac1{\eps^p}\int_\Omega H(\varepsilon\nabla^2 v) -\mathcal L(v)\EOR\\&\qquad\qquad\qquad+\AAA\frac\gamma{\eps^q}\,\|\,|\cof(\mathbf I+\eps \nabla v)\, \n|-1\,\|_{L^q(\partial \Omega)}^q\BBB.
\end{aligned}\end{equation}
We notice that as $\eps\to 0$  $$W(\mathbf I+\eps\nabla v)=\frac{\eps^2}2\,\mathbb  E(v)\,\nabla^2W(\mathbf I)\,\mathbb E(v) +o(\eps^2),$$
so that the second order term in the Taylor expansion of $W$ produces the standard quadratic potential of linear elasticity (being $\nabla^2W(\mathbf I)$ the fourth order elastic tensor), acting by frame indifference  only on the symmetric part of the gradient $$\mathbb E(v):=\frac{\nabla v+(\nabla v)^T}2.$$
On the other hand, by the formula $\cof \mathbf F=(\det \mathbf F)\, \mathbf F^{-T}$ we have
\begin{equation}\label{cofexpansion}\begin{aligned}
\cof(\mathbf I+\eps\nabla v)&=\det(\mathbf I+\eps\nabla v)\,(\mathbf I+\eps(\nabla v)^T)^{-1}\\&=(1+\eps\,\dv v+o(\eps))\,(\mathbf I-\eps(\nabla v)^T+o(\eps))=\mathbf I+\eps\mathbb A(v)+o(\eps),
\end{aligned}\end{equation}
where we have introduced the divergence free tensor $$\mathbb A(v)=\mathbf I\,\dv v-(\nabla v)^T,$$ corresponding to the linearization of the cofactor matrix. As a consequence,
\begin{equation}\label{rootexpansion}
|\cof(\mathbf I+\eps\nabla v)\n|=\sqrt{1+2\eps\,\mathbb A(v)\mathbf n\cdot\n+o(\eps)}=1+\eps\,\mathbb A(v)\n\cdot \n+o(\eps),
\end{equation}
\AAA where we notice that $\mathbb A(v)\n\cdot\n$ is the tangential divergence of $v$ on $\partial\Omega$.  \BBB
Therefore in the limit as $\eps\to 0$ we obtain the geometrically linearized functional
\begin{equation}\label{G*}
\mathcal G_*(v):=\frac12\int_\Omega \mathbb E(v)\,D^2 W(\mathbf I)\,\mathbb E(v)\,dx+\int_\Omega H(\nabla^2 v)\,dx -\mathcal L(v)+\gamma \AAA\int_{\partial \Omega}|\mathbb A(v)\n\cdot\n|^q\,dS\BBB
\end{equation}
as the pointwise limit of functionals $\mathcal G_h$ for every smooth enough $v$. We notice that the stored elastic energy and the surface tension term of functional $\mathcal G$ are frame indifferent, and indeed their counterparts in functional $\mathcal G_*$ depend on $\nabla v$ only through its symmetric part $\mathbb E(v)$, so that as expected they are  invariant by addition of an infinitesimal rigid displacement.  We have indeed $\nabla^2(c+\mathbf Wx)=0$ and $\mathbb A(c+\mathbf Wx)\n\cdot\n=\mathbf W\n\cdot\n=0$ for every $c\in\mathbb R^d$ and every $\mathbf W \in\mathbb R_{\mathrm{skew}}^{d\times d}$. \EM We stress that the natural choice $H(\nabla^2 v)=|\nabla^2v|
^2$ \AAA and $q=2$, yielding a quadratic stored elastic energy in \eqref{G*} and a quadratic surface energy term, \BBB is possible within our theory in the physical cases $d=2,3$, as our main results will be given under the \AAA restriction $p\ge dq/(q+1)$. \EOR


\MMM\subsection*{Alternative examples of surface  energies}
Let us next introduce possible alternative models, for which we will prove the validity of the same convergence results. \MK We can consider a surface energy term that penalizes differences in \AAB total \EEE surface areas between the reference and deformed configurations, e.g.,  
\[
\mathcal F(y):= \int_\Omega (W(\nabla y)+ H(\nabla^2 y))\,dx-\overline{\mathcal L}(y(x)-x)+\gamma \AAA\left|\int_{\partial\Omega}|\cof\nabla y(\sigma)\,\n(\sigma)|\,dS(\sigma) -|\partial\Omega|\right|^q\BBB.
\]
The change-of-variables formula for  surface integrals indicates that the last term is equal to \AAA  $\gamma\,||\partial\Omega^y|-|\partial\Omega||^q$, \MMM where $\partial\Omega^y$ denotes the boundary of the deformed configuration $y(\Omega)$\MK. Here $\gamma>0$ is again a physical parameter.
The associated rescaled energies are given by
\begin{equation}\label{Feps}\begin{aligned}
\mathcal F_\eps(v):=& \frac1{\eps^2}\int_\Omega W(\mathbf I+\eps \nabla v)\,dx+\MK \frac1{\eps^p}\int_\Omega H(\varepsilon\nabla^2v)\EOR-\mathcal L(v)\\&\qquad\qquad\qquad+\AAA \frac\gamma{\eps^q} \left|\int_{\partial\Omega}|\cof(\mathbf I+\eps\nabla v)\,\mathbf n|\,dS -|\partial\Omega|\right|^q\BBB.
\end{aligned}\end{equation}
Taking into account \eqref{cofexpansion} and \eqref{rootexpansion} we see that the as $\eps\to 0$
\[
\left|\int_{\partial\Omega}|\cof(\mathbf I+\eps\nabla v)\,\n|\,dS -|\partial\Omega|\right|=\eps\left|\int_{\partial\Omega}\mathbb A(v)\n\cdot\n\,dS\right|+o(\eps),
\] 
\MMM
so that  the formal linearized  functional obtained as $\eps\to 0$ is
 \begin{equation}\label{F*}
 \mathcal F_*(v):= \frac1{2}\int_\Omega \mathbb E(v)\,\nabla^2W(\mathbf I)\,\mathbb E(v)\,dx+\int_\Omega H(\nabla^2 v)\,dx-\mathcal L(v)+\gamma\AAA \left|\int_{\partial\Omega} \mathbb A(v)\,\n\cdot\n\,dS\right|^q\BBB.
 \end{equation}
We notice that \MK  the divergence theorem implies, assuming enough smoothness of $\partial\Omega$ and introducing a suitable extension of the normal $\n$ to the whole of $\Omega$,
\[\int_{\partial\Omega}\mathbb A(v)\,\n\cdot\n\,dS=\int_{\partial\Omega}\mathbb A(v)^T\n\cdot\n\,dS=
\int_{\Omega}\dv(\mathbb A(v)^T\n)\,dx=\int_{\Omega}\mathbb A(v): \nabla \n\,dx,
 \]
 having used the fact that $\mathbb A(v)$ is divergence free. Therefore \MMM functional $\mathcal F_*$ can also be rewritten as \MK
 \begin{equation*}
 \mathcal F_*(v)= \frac1{2}\int_\Omega \mathbb E(v)\,\nabla^2W(\mathbf I)\,\mathbb E(v)\,dx+\int_\Omega H(\nabla^2 v)\,dx-\mathcal L(v)+\AAA \gamma \left|\int_\Omega \mathbb A(v):\nabla\n\,dx\right|^q\BBB.
 \end{equation*}
\MK 

\EOR
 

 We can also consider \MMM a model featuring \KKK surface loading 
\[
\mathcal I(y):= \int_\Omega (W(\nabla y)+H(\nabla^2 y))\,dx-\overline{\mathcal L}(y(x)-x)+\AAA\gamma\int_{\partial\Omega}|(\cof\nabla y(\sigma)-\mathbf I)\,\n(\sigma)|^q\,dS(\sigma)\BBB.
\]
The last term \AAA features the $L^q(\partial\Omega;\mathbb R^3)$ \MMM distance between the normal vector $\n^y=\cof\nabla y\,\n$ in the deformed configuration and the undeformed one corresponding to $y(x)=x$. This restricts energetically favorable deformations in a stronger way as it penalizes local changes not only in the area of the boundary but  also in the orientation of the boundary, and indeed the last term pays energy also for rigid motions of the reference configuration. Again we shall introduce the rescaled energies
\begin{equation*}\begin{aligned}
\mathcal I_\eps(v):&= \frac1{\eps^2}\int_\Omega W(\mathbf I+\eps \nabla v)\,dx+\MK \frac1{\varepsilon^p}\int_\Omega H(\varepsilon\nabla^2 v)\EOR-\mathcal L(v)\\&\qquad\qquad\qquad+\AAA\frac\gamma{\eps^q} \int_{\partial\Omega}\left|(\cof(\mathbf I+\eps\nabla v)-\mathbf I)\,\n\right|^q\,dS.\AAA
\end{aligned}\end{equation*}
Thanks to \eqref{cofexpansion} we see that the formal limiting functional as $\eps\to 0$ is 
 \begin{equation*}
 \mathcal I_*(v)= \frac1{2}\int_\Omega \mathbb E(v)\,\nabla^2W(\mathbf I)\,\mathbb E(v)\,dx+\int_\Omega H(\nabla^2 v)\,dx-\mathcal L(v)+\AAA\gamma \int_{\partial\Omega} |\mathbb A(v)\n|^q\,dS\BBB.
 \end{equation*}
 \MK 
 
 \EM
 The main results about convergence of minimizer of functionals $\mathcal G_\eps$ \AAA with Dirichlet boundary conditions,  that we shall state in the next section, carry over for functionals $\mathcal F_\eps$ and $\mathcal I_\eps$ (we refer to \AAA Corollary \ref{fandi} below\BBB). \AAA On the other hand, when considering the Neumann problem, an interesting difference will arise in the treatment of functional $\mathcal I_\eps$.
 \EOR
 


 \section{Main results}\label{Sec:Results}


Let $d\ge 2$.
The strain energy density $W:\mathbb R^{d\times d}\to [0,+\infty]$ appearing in \eqref{Geps}  is a frame indifferent function that is assumed to be minimized at rotations, and sufficiently smooth around rotations.   Summarizing, the basic assumptions satisfied by $W$ are the following
\begin{equation}\label{frame}\begin{aligned}
&\mbox{$W:\mathbb R^{d\times d}\to [0,+\infty]$ \MK  is  continuous \EOR}, \\
&\mbox{$W(\mathbf R\mathbf F)=W(\mathbf F)$ for every $\mathbf R\in SO(d)$ and every $\mathbf F\in\R^{d\times d}$},\\
& \mbox{$W(\mathbf F)\ge W(\mathbf I)=0$ for every $\mathbf F\in\R^{d\times d}$},\\
& \mbox{$W\in C^2(\mathcal U)$ for some suitable open neighborhood $\mathcal U$ of $SO(d)$ in $\R^{d\times d}$},
\end{aligned}
\end{equation}
where $SO(d)$ denotes the special orthogonal group. A consequence of the smoothness of $W$ around rotations is therefore $DW(\mathbf R)=0$ for every $\mathbf R\in SO(d)$. \MK Continuity of $W$ means that whenever $F_k\to F$ in $\R^{d\times d}$ then $W(F_k)\to W(F)$ as $k\to\infty$ . \EOR
Moreover, $W$ is assumed to satisfy the following \MK convexity   property at the identity: \EOR
\begin{equation}\label{coerc}
\mbox{there exists $C>0$ such that }\;\;\mathbf F^T\,D^2W(\mathbf I)\,\mathbf F\ge C|\mathrm{sym}\,\mathbf F|^2\;\;\mbox{ for every $\mathbf F\in\mathbb R^{d\times d}$},
\end{equation}
where $\mathrm{sym}\,\mathbf F$ denotes the symmetric part of $\mathbf F$ and $|\cdot|$ denotes the Euclidean norm in $\R^{d\times d}$, i.e., $|\mathbf G|^2=\tr(\mathbf G^T\mathbf G)$. Here, $D^2W$ denotes the Hessian of $W$, and  $D^2W(\mathbf I)$ is the fourth order elasticity tensor, appearing in the quadratic potential acting on the infinitesimal strain tensor in the linearized energy $\mathcal G_*$ from \eqref{G*}.
Another standard coercivity condition that we shall use is the following: 
\begin{equation}\label{dist}
\mbox{there exists $\bar C>0$ such that }\;\;W(\mathbf F)\ge \bar C\,\mathrm{dist}^2(\mathbf F,SO(d))\;\;\mbox{ for every $\mathbf F\in\mathbb R^{d\times d}$},
\end{equation}
where $\mathrm{dist}(\mathbf F,SO(d)):=\inf \{|\mathbf F-\mathbf R|: \mathbf R\in SO(d)\}$. We remark that if $W$ satisfies \eqref{frame}, then \eqref{dist} is stronger than \eqref{coerc}. Indeed, since for every $\mathbf G\in\mathbb R^{d\times d}$ with positive determinant there holds $\mathrm{dist}(\mathbf G,SO(d))=|\sqrt{\mathbf G^T\mathbf G}-\mathbf I|$, we see that if \eqref{dist} holds then
\[\begin{aligned}
\frac12\mathbf F^T\,D^2W(\mathbf I)\,\mathbf F&=\lim_{\eps\to 0}\frac1{\eps^2}W(\mathbf I+\eps\mathbf F)\ge\bar C\limsup_{\eps \to0}\frac1{\eps^2}\,\mathrm{dist}^2(\mathbf I+\eps\mathbf F,SO(d))\\&= {\bar C}\limsup_{\eps\to0}\frac1{\eps^2}\,|\sqrt{(\mathbf I+\eps\mathbf F^T)(\mathbf I+\eps\mathbf F)}-\mathbf I|^2=\bar C|\mathrm{sym}\,\mathbf F|^2
\end{aligned}\]
for every $\mathbf F\in\mathbb R^{d\times d}$.
We also remark that the natural conditions  $\det\mathbf F\le 0\Rightarrow W(\mathbf F)=+\infty$ and $W(\mathbf F)\to+\infty$ if $ \det\mathbf F\to 0^+$ are compatible with all the above assumptions, but not necessary for the theory.
\MK

We will further assume that, \MMM for suitable $p>1$, \MK the function $H$ appearing in the second gradient term of \eqref{Geps} satisfies the following: 
\begin{equation}\label{H}\begin{aligned}
& H:\R^{d\times d\times d}\to\R {\textrm{ is a convex  positively $p$-homogeneous function}}, \\
&{\textrm{there exist  $C_0>0$, $C_1>0\;\;$ s.t.  $\;\forall\,\mathbf{B}\in\R^{d\times d\times d}$ }}\;\; C_0|\mathbf{B}|^p\le H(\mathbf{B})\le C_1(1+|\mathbf{B}|^p),\\
& H(\mathbf{RB})=H(\mathbf{B})  \textrm { for every }\mathbf{B} \in\R^{d\times d\times d} \textrm{ and every } \mathbf{R}\in SO(d).
\end{aligned}
\end{equation}
Here the product between $\mathbf{R}$ and $\mathbf{B}$ is defined as $(\mathbf{RB})_{imn}=\mathbf{R}_{ik}\mathbf{B}_{kmn}$ for all $i,m,n\in\{1,\ldots, d\}$.
\EOR

The load functional $\mathcal L$ appearing in $\mathcal G_\eps$ and in $\mathcal G_*$ is assumed to be a linear continuous functional on $W^{2,p}(\Omega;\R^d)$, where $p>1$ is the exponent that appears \MMM in \eqref{H}, \KKK 
 so that there exists a constant $C_{\mathcal L}>0$ such that
\begin{equation}\label{linearload}
|\mathcal L(v)|\le C_{\mathcal L}\,\|v\|_{W^{2,p}(\Omega;\R^d)}\quad\mbox{$\forall\;v\in W^{2,p}(\Omega;\R^d)$}.
\end{equation}

 \EM Let us start by considering the Dirichlet problem. \EOR
Let $\Gamma$ denote a closed subset of $\partial\Omega$ such that $S(\Gamma)=\mathcal H^{d-1}(\Gamma)>0$. For $k\in\mathbb N$ and $p>1$, let  $W^{k,p}_\Gamma(\Omega;\mathbb R^d)$ denote the \EM $W^{k,p}(\Omega;\R^d)$ completion of the space of restrictions to $\Omega$ of $C^{\infty}_c(\R^d\setminus\Gamma;\R^d)$ functions.  \EOR
Taking advantage of the homogeneous Dirichlet boundary condition on $\Gamma$, for $p>1$ we have the following \EM Poincar\'e \EOR inequality: there is $K>0$ such that
\begin{equation}\label{Korn}
\|v\|_{W^{2,p}(\Omega;\R^d)} 
\le 
K\|\nabla^2 v\|_{L^p(\Omega;\R^{d\times d\times d})}\quad\mbox{$\forall\;v\in W^{2,p}_{\Gamma}(\Omega;\R^d)$}.
\end{equation}

The following is the main result  about convergence of minimizers of the Dirichlet problem for functionals $\mathcal G_\eps$. For $p>d/2$ we let $\mathcal G_\eps:W^{2,p}_\Gamma(\Omega;\R^d)\to \R\cup\{+\infty\}$ be defined by \eqref{Geps} and  $\mathcal G_*: W^{2,p}_\Gamma(\Omega;\R^d)\to\R$ be defined by \eqref{G*}. \MMM Existence of minimizers over $W^{2,p}_\Gamma(\Omega;\R^d)$ for functional $\mathcal G_\eps$ (for fixed $\eps$) and functional $\mathcal G_*$ will be preliminarily proved in Section \ref{Sec:Proof1}.

\MK 
\begin{theorem}\label{mainth1} Let \AAA $q\ge1$ and $p\ge dq/(q+1)$. If $d=2$ and $q=1$, assume in addition that $p>1$. \BBB Suppose that $\mathcal L$ is  a bounded linear  functional on $W^{2,p}(\Omega;\R^d)$,  that $W$ satisfies \eqref{frame} and \eqref{coerc}, and that \eqref{H} holds.  Let $(\eps_j)_{j\in\mathbb N}\subset(0,1)$ be a vanishing sequence and let $(v_j)_{j\in\mathbb N}\subset W^{2,p}_\Gamma(\Omega;\mathbb R^d)$ be a sequence of  minimizers for functionals $\mathcal G_{\eps_j}$ over $W^{2,p}_\Gamma(\Omega;\mathbb R^d)$. 
Then the sequence $(v_j)_{j\in\mathbb N}$ is  weakly converging in $W^{2,p}(\Omega;\mathbb R^d)$ to the unique solution to the problem
$$
\min\left\{\mathcal G_*(v): v\in W^{2,p}_\Gamma(\Omega;\R^d)\right\}.$$
\end{theorem}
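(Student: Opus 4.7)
The plan is to follow the standard scheme for convergence of minimizers under $\Gamma$-convergence: (i) derive uniform $W^{2,p}$ bounds on the minimizers $(v_j)$ and extract a weak limit $v^*$; (ii) prove a liminf inequality $\mathcal G_*(v^*)\le\liminf_j \mathcal G_{\eps_j}(v_j)$; (iii) construct a recovery sequence for any competitor $w\in W^{2,p}_\Gamma(\Omega;\R^d)$, so that minimality of $v_j$ gives $\mathcal G_*(v^*)\le\mathcal G_*(w)$; (iv) conclude via uniqueness of the minimizer of $\mathcal G_*$. Comparing $v_j$ with the zero function one has $\mathcal G_{\eps_j}(v_j)\le\mathcal G_{\eps_j}(0)=0$; since every other term of \eqref{Geps} is non-negative, the coercivity of $H$ from \eqref{H} together with \eqref{linearload} and \eqref{Korn} produces
\[
C_0\|\nabla^2 v_j\|_{L^p(\Omega)}^p\le \int_\Omega H(\nabla^2 v_j)\,dx\le \mathcal L(v_j)\le C_\mathcal{L} K\|\nabla^2 v_j\|_{L^p(\Omega)},
\]
so Young's inequality shows that $(v_j)$ is uniformly bounded in $W^{2,p}_\Gamma(\Omega;\R^d)$. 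A (non-relabeled) subsequence converges weakly to some $v^*\in W^{2,p}_\Gamma(\Omega;\R^d)$; the hypothesis $p>d/2$ gives, via Rellich--Kondrachov, strong convergence $v_j\to v^*$ in $C^0(\overline{\Omega};\R^d)$ and $\nabla v_j\to\nabla v^*$ in $L^q(\Omega;\R^{d\times d})$ for every $q<p^*$ (in particular for $q=2$, since $p>d/2$ implies $p^*>2$), together with strong convergence of the boundary traces of $\nabla v_j$ in $L^s(\partial\Omega;\R^{d\times d})$ for some $s>1$ by trace compactness.

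The central step is the liminf inequality. Convexity of $H$ yields weak lower semicontinuity of $v\mapsto\int_\Omega H(\nabla^2 v)\,dx$ on $W^{2,p}$, and $\mathcal L(v_j)\to\mathcal L(v^*)$ by weak continuity. For the bulk elastic term, I would fix $\delta>0$ small enough that $W\in C^2$ on $B_{2\delta}(\mathbf I)$ and split $\Omega$ according to $E_j:=\{x\in\Omega:\eps_j|\nabla v_j(x)|>\delta\}$, whose measure vanishes as $\eps_j\to 0$ since $\eps_j\nabla v_j\to 0$ in $L^{p^*}$. On $\Omega\setminus E_j$ the Taylor expansion (using $W(\mathbf I)=0$ and $DW(\mathbf I)=0$) provides
\[
\eps_j^{-2}W(\mathbf I+\eps_j\nabla v_j)=\tfrac12\,\mathbb E(v_j)\,D^2W(\mathbf I)\,\mathbb E(v_j)+r_j,\qquad|r_j|\le\omega(\delta)|\nabla v_j|^2,
\]
for a modulus $\omega$ vanishing at $0$; combining this with $W\ge 0$ on $E_j$, strong $L^2$ convergence of $\mathbb E(v_j)$, equi-integrability of $|\mathbb E(v_j)|^2$ (coming from the $L^{p^*}$ bound with $p^*>2$) and the quadratic lower bound \eqref{coerc}, one first passes to the limit $j\to\infty$ and then lets $\delta\to 0$ to extract the desired liminf. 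For the surface term, \eqref{rootexpansion}, strong $L^s$ convergence of the boundary traces of $\nabla v_j$, and the uniform pointwise estimate
\[
\eps_j^{-1}\bigl|\,|\cof(\mathbf I+\eps_j\nabla v_j)\n|-1\,\bigr|\le C\sum_{k=1}^{d-1}\eps_j^{k-1}|\nabla v_j|^k,
\]
stemming from the polynomial identity $\cof(\mathbf I+\eps\nabla v)-\mathbf I=\sum_{k=1}^{d-1}\eps^k\mathbb B_k(v)$ with $\mathbb B_k$ homogeneous of degree $k$ in $\nabla v$ and $\mathbb B_1=\mathbb A$, furnish equi-integrability on $\partial\Omega$; Vitali's theorem then delivers convergence of the surface term to $\gamma\int_{\partial\Omega}|\mathbb A(v^*)\n\cdot\n|\,dS$.

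For the recovery sequence I would take $w_j\equiv w$, first assuming $w\in W^{2,p}_\Gamma(\Omega;\R^d)\cap C^2(\overline{\Omega};\R^d)$: for such $w$, $\nabla w$ is bounded so $\mathbf I+\eps_j\nabla w\in\mathcal U$ eventually, and dominated convergence applied to the uniform expansions \eqref{cofexpansion}, \eqref{rootexpansion} and the Taylor expansion of $W$ gives $\mathcal G_{\eps_j}(w)\to\mathcal G_*(w)$. General $w\in W^{2,p}_\Gamma(\Omega;\R^d)$ is then handled by density of $W^{2,p}_\Gamma\cap C^2(\overline{\Omega};\R^d)$ and continuity of $\mathcal G_*$ with respect to strong $W^{2,p}$ convergence (each summand of \eqref{G*} being continuous under strong trace convergence), through a diagonal extraction. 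Minimality of $v_j$ then yields
\[
\mathcal G_*(v^*)\le\liminf_j\mathcal G_{\eps_j}(v_j)\le\liminf_j\mathcal G_{\eps_j}(w_j)\le\mathcal G_*(w)
\]
for every $w\in W^{2,p}_\Gamma(\Omega;\R^d)$, so $v^*$ is a minimizer of $\mathcal G_*$. Uniqueness follows from strict convexity of the quadratic elastic term in $\mathbb E(v)$ via \eqref{coerc}, together with the observation that any infinitesimal rigid displacement in $W^{2,p}_\Gamma$ must vanish identically when $\mathcal H^{d-1}(\Gamma)>0$; since every subsequence of $(v_j)$ admits a further subsequence converging weakly to the same $v^*$, the whole sequence converges weakly in $W^{2,p}$ to $v^*$ as claimed.

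The main obstacle I anticipate is the bulk elastic liminf: the stronger growth assumption \eqref{dist} is not available under the hypotheses of the theorem, so only the local $C^2$ Taylor expansion of $W$ in a neighborhood of $\mathbf I$ can be invoked, and the decomposition $\Omega=E_j\cup(\Omega\setminus E_j)$ must be organized carefully, exploiting the $L^{p^*}$ smallness of $\eps_j\nabla v_j$ to ensure the bad set does not spoil the estimate. A companion technical point is the equi-integrability on $\partial\Omega$ required for Vitali's theorem in the surface term, which hinges explicitly on the restriction $p>d/2$ and on the fractional trace embedding $W^{1-1/p,p}(\partial\Omega)\hookrightarrow L^s(\partial\Omega)$ with $s>1$.
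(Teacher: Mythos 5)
Your proof is correct and follows the same overall $\Gamma$-convergence strategy as the paper: coercivity of the second-gradient term provides the uniform $W^{2,p}$ bound on the minimizers; the bulk liminf is obtained by splitting $\Omega$ into a region where $\eps_j\nabla v_j$ is small (so the $C^2$ Taylor expansion of $W$ near $\mathbf I$ applies) and a small remainder; a recovery sequence is built from smooth approximants; and uniqueness follows from strict convexity via \eqref{coerc} plus triviality of infinitesimal rigid displacements in $W^{2,p}_\Gamma$. The only noticeable technical variation is in the surface term. The paper's Lemma \ref{keylemma} introduces a good set $Q_j\subset\partial\Omega$ on which $|\mathbb A(v_j)|+\eps_j|\mathbb B(v_j)|<2^{-4}\eps_j^{-1/4}$, expands $\sqrt{1+t}$ in a convergent Taylor series there, and bounds the contribution of $\partial\Omega\setminus Q_j$ directly; you instead observe the pointwise bound $\eps_j^{-1}\bigl||\cof(\mathbf I+\eps_j\nabla v_j)\n|-1\bigr|\le C(1+|\nabla v_j|^{d-1})$, combine the $L^{(d-1)p/(d-p)}(\partial\Omega)$ trace bound (here $p>d/2$ is precisely what makes the exponent exceed $d-1$) with trace compactness for a.e.\ convergence, and conclude by Vitali. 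Both establish exactly \eqref{mainlimit}; your route is somewhat more compact, while the paper's is more explicit and self-contained. Likewise, your diagonal-extraction recovery sequence performs the same job as the paper's explicit construction in Lemma \ref{limsup}, where the smooth approximants are repeated with multiplicities tuned so that $\eps_j\|\nabla v_j\|_{C^0(\overline\Omega;\R^{d\times d})}\to 0$.
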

\EOR

 When considering the pure traction problem, it is natural to assume that external loads are equilibrated, i.e., with null resultant and momentum:  
 \begin{equation}\label{equi}
 \mathcal L(a+\mathbf Wx)=0 \quad\mbox{for every $a\in\mathbb R^d$ and every skew-symmetric matrix $\mathbf W\in\mathbb R^{d\times d}$}.
 \end{equation}
  A consequence of \eqref{equi} is that, by invoking Korn and Poincar\'e inequalities, if $\mathcal L$ is a bounded linear functional over $W^{2,r}(\Omega;\mathbb R^d)$ for some $r>1$, thus satisfying \eqref{linearload} with $p=r$, and if $u\in W^{2,r}(\Omega;\R^d)$, we have for some suitable $a\in\mathbb R^d$ and some suitable skew-symmetric $\mathbf W\in \R^{d\times d}$
\begin{equation}\label{kornpoincare}\begin{aligned}
|\mathcal L(u)|&=|\mathcal L(u-\mathbf Wx-a)|\\&\le C_{\mathcal L}\left(\|u-\mathbf Wx-a\|_{L^r(\Omega;\R^d)}+\|\nabla u-\mathbf W\|_{L^r(\Omega;\mathbb R^{d\times d})}+\|\nabla^2 u\|_{L^r(\Omega;\R^{d\times d\times d})}\right)\\
&\le  C_{\mathcal L}\left((1+\mathfrak c)\mathfrak K \, \|\mathbb E (u)\|_{L^r(\Omega;\mathbb R^{d\times d})}+\|\nabla^2 u\|_{L^r(\Omega;\R^{d\times d\times d})}\right),
\end{aligned}\end{equation}
 where $\mathfrak c$ is the constant in Poincar\'e inequality and $\mathfrak K$ is the constant in second Korn inequality \cite{N}, both depending on $r$ and $\Omega$ only.
 
A further condition that proves to be \AAA crucial \BBB for the theory is
\begin{equation}\label{compatibility}
\mathcal L(\mathbf Rx-x)\le 0\quad\mbox{ for every $\mathbf R\in SO(d)$}
\end{equation}
expressing the fact that external loads have an overall effect of traction (and not of compression) on $\Omega$. This condition appears in \cite{MPTJOTA,MPTARMA,MP2,MPsharp,MOP}.
Following the same references, for load functionals that satisfy \eqref{compatibility} we introduce the \AAA set \BBB
\[
\mathcal S_{\mathcal L}^0:=\{\mathbf R\in SO(d):\mathcal L(\mathbf Rx-x)=0\},
\]
which plays a crucial role in the linearization process. Indeed, if $\mathbf R_0\in SO(d)$ exists such that $\mathcal L(\mathbf R_0x-x)>0$, if $(\eps_j)_{j\in\mathbb N}\subset(0,1)$ is any vanishing sequence, letting $\tilde v_j:=\frac1{\eps_j}(\mathbf R_0x-x)$, since $\cof \mathbf R_0=\mathbf R_0$, $W(\mathbf R_0)=0$ and $\nabla^2(\mathbf R_0x-x)=0$, we see that
\[
\mathcal G_{\eps_j}(\tilde v_j)=-\frac1{\eps_j}\mathcal L(\mathbf R_0x-x),
\]
 thus
 \[
 \lim_{j\to+\infty}\inf_{C^\infty(\overline\Omega;\R^d)}\mathcal G_{\eps_j}=-\infty.
 \]
 Therefore \eqref{compatibility} is a necessary condition for avoiding the nonlinear energies being unbounded from below as the parameter $\eps$ goes to zero. Moreover, under the conditions \eqref{equi} and \eqref{compatibility}, it turns out (see Theorem \ref{main2} below) that the actual limit functional is
 \[
 \overline{\mathcal G}(u):= \frac12\int_\Omega \mathbb E(u)\,\nabla^2 W(\mathbf I)\,\mathbb E(u)\,dx+\int_\Omega H(\nabla^2 v)\,dx -\max_{\mathbf R\in\mathcal S^0_\mathcal L}\mathcal L(\mathbf R v)+\gamma\AAA \int_{\partial \Omega}|\mathbb A(v)\n\cdot\n|^q\,dS\BBB.
 \]
 We may notice that under a stronger condition on external loads, i.e. under the additional assumption $\mathcal S^0_\mathcal L\equiv\{\mathbf I\}$ (meaning that external loads do not have any axis of equilibrium), then $\overline{\mathcal{G}}\equiv\mathcal G_*$. See \cite{MPsharp,MOP} for a thorough discussion on this topic in linearized models with no surface tension effect.

 We have the following second main result.
 It will be proven in Section \ref{Sec:Proof2}, after having proven the existence of minimizers over $W^{2,p}(\Omega;\R^d)$, in the pure traction problem, for functional $\mathcal G_\eps$ (for fixed $\eps$) and for functional $\overline{\mathcal G}$.
 \MK 
 \begin{theorem}\label{main2}
Let \AAA  $q\ge1$, $p\ge dq/(q+1)$. If $d=2$ and $q=1$, assume in addition that $p>1$. \BBB Let $W$ satisfy \eqref{frame} and \eqref{dist} and let $H$ satisfy \eqref{H}. Let $\mathcal L$ be a bounded linear functional over $W^{2,2\wedge p}(\Omega;\R^d)$ that satisfies \eqref{equi} and \eqref{compatibility}.
Let $(\eps_j)_{j\in\mathbb N}\subset(0,1)$  be a vanishing sequence.
 If $(v_j)_{j\in\mathbb N}\subset W^{2,p}(\om,\mathbb R^d)$ is a sequence of minimizers of $\mathcal G_{\eps_j}$ over $W^{2,p}(\Omega;\R^d)$, 
 then there exists a sequence
  $(\mathbf R_j)_{j\in\mathbb N}\subset SO(d)$  such that, by defining 
 \begin{equation*}
u_j(x):= \mathbf R_j^Tv_j(x)+\frac1{\eps_j}(\mathbf R^T_jx-x),
\end{equation*}\KKK
in the limit as $j\to+\infty$, along a  suitable (not relabeled) subsequence, there holds
 \begin{equation*}
 \nabla u_j\to\nabla  u_* \; \mbox{ weakly in $W^{1, p}(\Omega;\R^{d\times d})$},\quad
 \end{equation*}
where $u_*\in W^{2,p}(\Omega,\R^d)$ is a minimizer of $\overline{\mathcal G}$ over $W^{2,p}(\om,\mathbb R^d)$,
and
\begin{equation}\label{last}   \mathcal G_{\eps_j}(v_j)\to \overline{\mathcal G}(u_*),\quad\qquad
\EM \min_{W^{2,p}(\om,\R^d)}\EOR\mathcal G_{\eps_{j}}\to \min_{W^{2,p}(\om,\R^d)}\overline{\mathcal G}. 
\end{equation}
 \end{theorem}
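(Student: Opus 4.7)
The plan is to combine the rigidity/compactness technique of Friesecke--James--M\"uller (FJM) with the Mainini--Percivale scheme for pure-traction linearization, adapted to accommodate the second-gradient and surface terms. I would first establish existence of minimizers for $\mathcal{G}_{\eps_j}$ at fixed $\eps_j>0$ and for $\overline{\mathcal G}$ on $W^{2,p}(\Omega;\R^d)$ by the direct method: the bulk term is weakly lsc by continuity of $W$ and Fatou, the second-gradient term is weakly lsc by convexity of $H$, and the surface term passes to the limit using $\nabla v_j\rightharpoonup\nabla v$ in $W^{1,p}$ together with weak continuity of cofactors (exploiting $p>d/2$). Coercivity is obtained by combining \eqref{dist}, \eqref{H}, and \eqref{compatibility} through an FJM/Korn--Poincar\'e argument which rules out unbounded descent along rigid rotations.

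For compactness of a sequence $(v_j)$ of minimizers, testing against $v\equiv 0$ gives $\mathcal{G}_{\eps_j}(v_j)\le 0$. Combined with \eqref{dist} and the FJM rigidity theorem applied to $\mathbf{I}+\eps_j\nabla v_j$, this produces rotations $\mathbf R_j\in SO(d)$ with $\|\mathbf{I}+\eps_j\nabla v_j-\mathbf R_j\|_{L^2}^2\le C\eps_j^2$. Defining $u_j$ as in the statement, one computes $\eps_j\nabla u_j=\mathbf R_j^T(\mathbf{I}+\eps_j\nabla v_j)-\mathbf{I}$, so $\nabla u_j$ is bounded in $L^2$, while $\nabla^2 u_j=\mathbf R_j^T\nabla^2 v_j$ together with the $p$-coercivity in \eqref{H} gives $\nabla^2 u_j$ bounded in $L^p$. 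A Poincar\'e argument for $\nabla u_j-\overline{\nabla u_j}$ upgrades this to boundedness of $\nabla u_j$ in $W^{1,p}$, yielding a weakly convergent subsequence with limit $\nabla u_*$.

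The most delicate point is the load term, since $v_j=\mathbf R_j u_j+\eps_j^{-1}(\mathbf R_j x-x)$ produces the contribution $-\eps_j^{-1}\mathcal L(\mathbf R_j x-x)\ge 0$ by \eqref{compatibility}. The bound $\mathcal{G}_{\eps_j}(v_j)\le 0$ forces this quantity to stay bounded, hence any accumulation point $\mathbf R_\infty$ of $(\mathbf R_j)$ (existing by compactness of $SO(d)$) must lie in $\mathcal{S}^0_\mathcal{L}$. The remaining piece $-\mathcal L(\mathbf R_j u_j)$ converges to $-\mathcal L(\mathbf R_\infty u_*)$ thanks to \eqref{kornpoincare} applied to the difference and the weak convergence. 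To recover the full $\max_{\mathbf R\in\mathcal{S}^0_\mathcal{L}}\mathcal L(\mathbf R u_*)$ appearing in $\overline{\mathcal G}$, one exploits the invariance of the FJM bound under right-multiplication of $\mathbf R_j$ by any element of $\mathcal{S}^0_\mathcal{L}$: this freedom is used to select an optimal subsequence of rotations and, in the construction of the recovery sequence $\widetilde v_j=\mathbf R_\star u+\eps_j^{-1}(\mathbf R_\star x-x)$ (with $\mathbf R_\star$ realizing the maximum at a given minimizer $u$ of $\overline{\mathcal G}$), yields the matching upper bound $\limsup\mathcal{G}_{\eps_j}(\widetilde v_j)\le\overline{\mathcal G}(u)$.

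The remaining liminf inequalities are by now standard in each block: a second-order Taylor expansion of $W$ around $\mathbf R_j$, combined with frame indifference and Fatou, delivers the quadratic term in $\mathbb{E}(u_*)$; the $H$-term uses $H(\nabla^2 v_j)=H(\nabla^2 u_j)$ by the frame-indifference assumption in \eqref{H}, after which weak lsc from convexity applies; the surface term uses the identity $\cof(\mathbf{I}+\eps_j\nabla v_j)=\mathbf R_j\cof(\mathbf{I}+\eps_j\nabla u_j)$, so that $|\cof(\mathbf{I}+\eps_j\nabla v_j)\n|=|\cof(\mathbf{I}+\eps_j\nabla u_j)\n|$ is invariant under the FJM rotation, and the expansions \eqref{cofexpansion}--\eqref{rootexpansion} combined with strong trace convergence of $\nabla u_j$ to $\nabla u_*$ in $L^1(\partial\Omega)$ (via the compact trace embedding $W^{1,p}(\Omega)\hookrightarrow L^1(\partial\Omega)$, which holds since $p>d/2$) give the limiting $\gamma\int_{\partial\Omega}|\mathbb A(u_*)\n\cdot\n|\,dS$. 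Chaining $\overline{\mathcal G}(u_*)\le\liminf\mathcal{G}_{\eps_j}(v_j)\le\limsup\mathcal{G}_{\eps_j}(v_j)\le\overline{\mathcal G}(u)$ for any competitor $u$ produces both the minimality of $u_*$ and the convergences in \eqref{last}. The main obstacle throughout is the coupling of the FJM rotations with the kernel $\mathcal{S}^0_\mathcal{L}$: the argument must simultaneously keep the a priori diverging quantity $\eps_j^{-1}\mathcal L(\mathbf R_j x-x)$ under control and identify the correct rotation-dependent limit of the load as a maximum over $\mathcal{S}^0_\mathcal{L}$.
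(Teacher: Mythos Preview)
Your plan follows the same strategy as the paper (FJM rigidity combined with the compatibility condition \eqref{compatibility}, then a $\Gamma$-liminf/limsup sandwich), and the treatment of the second-gradient and surface terms via frame indifference is exactly right. There is, however, a genuine circularity in your compactness step that the paper handles differently.

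You write that $\mathcal G_{\eps_j}(v_j)\le 0$ together with \eqref{dist} and FJM directly yields $\|\mathbf I+\eps_j\nabla v_j-\mathbf R_j\|_{L^2}^2\le C\eps_j^2$, and hence $\nabla u_j$ bounded in $L^2$. But the energy bound only gives $\eps_j^{-2}\int W(\mathbf I+\eps_j\nabla v_j)\le \mathcal L(v_j)$, and $\mathcal L(v_j)=\eps_j^{-1}\mathcal L(\mathbf R_jx-x)+\mathcal L(\mathbf R_ju_j)$ is not yet known to be bounded: the second term is controlled by $\|\nabla u_j\|_{L^2}+\|\nabla^2 u_j\|_{L^p}$ via \eqref{kornpoincare}, which is precisely what you are trying to bound. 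The paper (its Lemma~\ref{compactness2}) resolves this by writing
\[
c\|\nabla u_j\|_{L^2}^2+C_0\|\nabla^2 u_j\|_{L^p}^p\;\le\; M+\mathcal L(\mathbf R_ju_j)\;\le\; M+Q\bigl(\|\nabla u_j\|_{L^2}+\|\nabla^2 u_j\|_{L^p}\bigr)
\]
after dropping $\eps_j^{-1}\mathcal L(\mathbf R_jx-x)\le 0$, and then absorbing the right-hand side by Young's inequality. All three bounds (on $\nabla u_j$, on $\nabla^2 u_j$, and on $\eps_j^{-1}\mathcal L(\mathbf R_jx-x)$) come out \emph{simultaneously}, not sequentially. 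Once you rearrange this, your argument goes through.

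Two smaller remarks. First, your recovery sequence $\widetilde v_j=\mathbf R_\star u+\eps_j^{-1}(\mathbf R_\star x-x)$ with a \emph{fixed} $u$ requires care in the bulk term: since $W$ is only $C^2$ on a neighborhood $\mathcal U$ of $SO(d)$, you need $\mathbf I+\eps_j\nabla u(x)\in\mathcal U$ for the Taylor expansion, and $\nabla u$ need not be in $L^\infty$ when $p\le d$. The paper (Lemma~\ref{limsup2}) instead takes smooth $u_j\to u$ in $W^{2,p}$ diagonalized so that $\eps_j\|\nabla u_j\|_{C^0}\to 0$. Second, the remark about ``invariance of the FJM bound under right-multiplication of $\mathbf R_j$ by elements of $\mathcal S^0_{\mathcal L}$'' is unnecessary: for the liminf you only need $-\mathcal L(\mathbf R_\infty u_*)\ge -\max_{\mathbf R\in\mathcal S^0_{\mathcal L}}\mathcal L(\mathbf R u_*)$, which holds since $\mathbf R_\infty\in\mathcal S^0_{\mathcal L}$; the maximum is recovered entirely on the limsup side by choosing $\mathbf R_\star$ optimally in the recovery sequence.
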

 \EOR
 
 In order to obtain compactness in Theorem \ref{main2}, we shall make use of  the Firesecke-James-M\"uller rigidity inequality \cite{FJM0}, stating that there exists a constant $C_\Omega>0$ (only depending on $\Omega$) such that for every $\varphi\in W^{1,2}(\Omega;\R^d)$ there is $\mathbf R\in SO(d)$ such that
 \begin{equation}\label{rigidity}
 \int_\Omega |\nabla \varphi-\mathbf R|^2\le C_\Omega\int_\Omega\mathrm{dist}^2(\nabla\varphi,SO(d)).
 \end{equation}
In fact, as we shall see from the proof the sequence $(\mathbf R_j)_{j\in\mathbb N}$ in Theorem \ref{main2} can be chosen as a sequence of rotations for which \eqref{rigidity} holds with $\varphi=v_j$ for every $j\in\mathbb N$.

\MMM
We remark that
our main converge results hold true for  other model energies that we have introduced in Section \ref{intro}. Indeed, defining $\overline{\mathcal F}$  by replacing the term $\mathcal L(v)$ with the term $\max_{\mathbf R\in\mathcal S^0_\mathcal L}\mathcal L(\mathbf R v)$ in functional $\mathcal F_*$  we have
\begin{corollary}\label{fandi} The same statement of {\rm Theorem \ref{mainth1}}    holds true  if we replace functionals $\mathcal G_\eps$ and functional $\mathcal G_*$ therein with functionals $\mathcal F_\eps$ (resp. $\mathcal I_\eps$) and functional $\mathcal F_*$ (resp. $\mathcal I_*$). 
\AAA Moreover, the same statement of {\rm Theorem \ref{main2}}    holds true  if we replace functionals $\mathcal G_\eps$ and functional $\overline{\mathcal G}$ therein with functionals $\mathcal F_\eps$  and functional $\overline{\mathcal F}$.\BBB 
\end{corollary}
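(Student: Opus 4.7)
My plan is to follow the proofs of Theorem \ref{mainth1} and Theorem \ref{main2} with only the treatment of the surface term modified, since the bulk elastic energy $W(\mathbf I + \eps \nabla v)/\eps^2$, the second-gradient term $H(\eps \nabla^2 v)/\eps^p$, the load $\mathcal L$, and (in the pure traction case) the rotation adjustment based on the Friesecke-James-M\"uller inequality \eqref{rigidity} are identical across the three models. Compactness of minimizing sequences in $W^{2,p}$ is preserved because the surface terms of both $\mathcal F_\eps$ and $\mathcal I_\eps$ are nonnegative: the standard energy-lower-bound argument combining $W\ge 0$, the coercivity $H(\eps\nabla^2 v)/\eps^p\ge C_0\|\nabla^2 v\|_{L^p}^p$ coming from \eqref{H}, and the load estimates \eqref{linearload}, \eqref{Korn}, \eqref{kornpoincare} that underlie Theorems \ref{mainth1} and \ref{main2} carries over verbatim, as does the selection of rotations $\mathbf R_j$ in the pure traction setting.

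The central new ingredient is the surface linearization. Given any sequence $v_j \weak v$ in $W^{2,p}(\Omega;\R^d)$ with $p>d/2$, the compact trace embedding $W^{2,p}(\Omega;\R^d)\hookrightarrow\hookrightarrow W^{1-1/q,q}(\partial\Omega;\R^d)$ for a suitable $q>1$ yields $\nabla v_j\to\nabla v$ strongly in $L^q(\partial\Omega;\R^{d\times d})$, and in particular $\eps_j\nabla v_j\to 0$ in the same space. Combined with the expansion \eqref{cofexpansion} and a Vitali argument to control the $o(\eps)$ remainder---pointwise bounded by $C\eps_j|\nabla v_j|^2$ and thus equi-integrable on $\partial\Omega$---this gives the strong convergence
\[
\eps_j^{-1}\bigl(\cof(\mathbf I+\eps_j\nabla v_j)-\mathbf I\bigr)\n \;\to\; \mathbb A(v)\,\n \qquad\mbox{in } L^1(\partial\Omega;\R^d),
\]
and consequently $\eps_j^{-1}(|\cof(\mathbf I+\eps_j\nabla v_j)\n|-1)\to \mathbb A(v)\n\cdot\n$ in $L^1(\partial\Omega)$ by the Taylor identity \eqref{rootexpansion}.

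With these convergences in hand, the two alternative surface terms are handled immediately. For $\mathcal F_\eps$, integration on $\partial\Omega$ yields convergence of $\eps_j^{-1}(\int_{\partial\Omega}|\cof(\mathbf I+\eps_j\nabla v_j)\n|\,dS-|\partial\Omega|)$ to $\int_{\partial\Omega}\mathbb A(v)\n\cdot\n\,dS$, and continuity of $|\cdot|$ yields both the $\liminf$ inequality and the recovery of the surface term along the constant sequence $v_\eps\equiv v$ (respectively along the rotation-adjusted sequence used in Theorem \ref{main2}). For $\mathcal I_\eps$, the same strong $L^1(\partial\Omega)$ convergence transfers to $\eps_j^{-1}\int_{\partial\Omega}|(\cof(\mathbf I+\eps_j\nabla v_j)-\mathbf I)\n|\,dS\to\int_{\partial\Omega}|\mathbb A(v)\n|\,dS$, covering both halves of the $\Gamma$-convergence. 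Combined with the bulk $\Gamma$-convergence already established in Theorems \ref{mainth1} and \ref{main2}, this yields the full statement of Corollary \ref{fandi}, including convergence of minima as in \eqref{last}. The main technical obstacle is precisely the equi-integrability control of the Taylor remainder; the hypothesis $p>d/2$ is exactly what makes it available, via the compactness of the trace embedding for $\nabla v_j$.
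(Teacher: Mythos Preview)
Your approach is essentially the paper's: the proof there simply notes that the bulk, second-gradient and load terms are unchanged and invokes Corollary \ref{corolast}---which records exactly the two surface convergences \eqref{secondboundarylimit} and \eqref{thirdboundarylimit} that you derive---in place of Lemma \ref{keylemma}, so that the proofs of Theorems \ref{mainth1} and \ref{main2} carry over verbatim. One minor caution: your remainder bound $C\eps_j|\nabla v_j|^2$ is literally correct only for $d\le 3$ (for general $d$ the cofactor remainder $\eps_j\mathbb B(v_j)$ has degree $d-1$ in $\nabla v_j$, and one uses instead the $L^1(\partial\Omega)$ bound \eqref{trace}), and the passage from the vector $L^1(\partial\Omega)$ convergence to the scalar one ``by \eqref{rootexpansion}'' is not quite immediate---it requires either the truncation sets $Q_j$ of Lemma \ref{keylemma} or a Vitali argument in which the domination is by $|\mathbb A(v_j)\n+\eps_j\mathbb B(v_j)\n|$ (equi-integrable since strongly $L^1$-convergent) rather than by $\eps_j|\nabla v_j|^2$, which need not be equi-integrable for all $p>d/2$.
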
  \KKK
 
 \AAA	
 We conclude with the following result that shows a remarkable difference in the limiting behavior of functionals $\mathcal I_\eps$. The surface live load term prevents indeed rigid rotations. 
  \begin{theorem}\label{main3}
Let \AAA   $p\ge dq/(q+1)$. If $d=2$ and $q=1$, assume in addition that $p>1$.  Let $W$ satisfy \eqref{frame} and \eqref{dist} and let $H$ satisfy \eqref{H}. Let $\mathcal L$ be a bounded linear functional over $W^{2,2\wedge p}(\Omega;\R^d)$ that satisfies \eqref{equi} and \eqref{compatibility}.
Let $(\eps_j)_{j\in\mathbb N}\subset(0,1)$  be a vanishing sequence.
 If $(v_j)_{j\in\mathbb N}\subset W^{2,p}(\om,\mathbb R^d)$ is a sequence of minimizers of $\mathcal I_{\eps_j}$ over $W^{2,p}(\Omega;\R^d)$, 
then in the limit as $j\to+\infty$, along a  suitable (not relabeled) subsequence, there holds
 \begin{equation*}
 \nabla v_j\to\nabla  v_* \; \mbox{ weakly in $W^{1, p}(\Omega;\R^{d\times d})$},\quad
 \end{equation*}
where $v_*\in W^{2,p}(\Omega,\R^d)$ is a minimizer of ${\mathcal I}_*$ over $W^{2,p}(\om,\mathbb R^d)$,
and
\begin{equation}\label{last1}   \mathcal I_{\eps_j}(v_j)\to {\mathcal I}_*(v_*),\quad\qquad
\EM \min_{W^{2,p}(\om,\R^d)}\EOR\mathcal I_{\eps_{j}}\to \min_{W^{2,p}(\om,\R^d)}{\mathcal I}_*. 
\end{equation}
 \end{theorem}
\BBB

\section{The Dirichlet problem: proof of Theorem \ref{mainth1}}\label{Sec:Proof1} 

We first prove existence of minimizers for functional $\mathcal G_*$.  

\begin{lemma}\label{exist}
Let \AAA $p\ge 2d/(d+2)$ and $p\ge dq/(d-1+q)$. If $d=2$ and $q=1$, let $p>1$.  \MMM
Let \eqref{frame}, \eqref{coerc} and \eqref{H} hold. Let $\mathcal L$ be a bounded linear functional over $W^{2,p}(\Omega;\R^d)$. \BBB
There exists a unique solution to the  problem
\[
\min\{\mathcal G_*(v): v\in W^{2,p}_{\Gamma}(\Omega;\R^d)\}.
\]
\end{lemma}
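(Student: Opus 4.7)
The plan is to apply the direct method of the calculus of variations, which splits into the standard ingredients of coercivity, weak lower semicontinuity, and strict convexity for uniqueness.

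First I would verify that every term of $\mathcal G_*$ is well-defined on $W^{2,p}_\Gamma(\Omega;\mathbb R^d)$. The role of the hypothesis $p\ge 2d/(d+2)$ (or $p>1$ in dimension $d=2$) is precisely to guarantee the Sobolev embedding $W^{2,p}(\Omega)\hookrightarrow W^{1,2}(\Omega)$ so that $\mathbb E(v)\in L^2(\Omega;\R^{d\times d})$ and the quadratic elastic term makes sense. The second gradient integral is controlled by \eqref{H}. For the boundary term I would use the trace theorem: for $p>1$ the trace $W^{1,p}(\Omega)\to L^p(\partial\Omega)$ is continuous and in fact compact into $L^1(\partial\Omega)$ (since $(d-1)p/(d-p)>1$ iff $p>1$), so $\mathbb A(v)\n\cdot\n\in L^1(\partial\Omega)$ with norm controlled by $\|v\|_{W^{2,p}}$. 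Together with \eqref{linearload}, this shows $\mathcal G_*$ is finite on $W^{2,p}_\Gamma$.

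Next I would derive coercivity. Using the Poincar\'e-type inequality \eqref{Korn} on $W^{2,p}_\Gamma$ together with the lower bound in \eqref{H} and the continuity of $\mathcal L$, dropping the nonnegative quadratic and boundary terms gives
\[
\mathcal G_*(v)\ge C_0\|\nabla^2 v\|^p_{L^p}-C_{\mathcal L}\|v\|_{W^{2,p}}\ge C_0 K^{-p}\|v\|^p_{W^{2,p}}-C_{\mathcal L}\|v\|_{W^{2,p}},
\]
which is coercive since $p>1$. Hence any minimizing sequence $(v_k)$ is bounded in $W^{2,p}_\Gamma(\Omega;\R^d)$ and, up to a subsequence, converges weakly to some $v_\infty\in W^{2,p}_\Gamma$.

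For weak lower semicontinuity I would check each term separately: the quadratic elastic term is convex in $\mathbb E(v)$ by \eqref{coerc} and continuous on $L^2$, hence weakly lsc on $W^{1,2}$, and therefore on $W^{2,p}$ through the embedding; the second gradient term is weakly lsc because $H$ is convex; the load term $\mathcal L$ is weakly continuous by linearity and boundedness; for the boundary term, weak convergence in $W^{2,p}$ implies strong convergence of $\nabla v_k$ in $L^1(\partial\Omega;\R^{d\times d})$ (by the compact trace recalled above), which yields convergence of $|\mathbb A(v_k)\n\cdot\n|$ in $L^1(\partial\Omega)$ and hence continuity of this term. Summing up, $v_\infty$ is a minimizer.

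Finally, uniqueness follows from strict convexity of $\mathcal G_*$ on $W^{2,p}_\Gamma$. The second gradient term and the boundary term are convex (the latter by convexity of $|\,\cdot\,|$ composed with the linear map $v\mapsto \mathbb A(v)\n\cdot\n$), $\mathcal L$ is linear, and the quadratic elastic term is strictly convex in the following sense: by \eqref{coerc} it controls $\|\mathbb E(v_1-v_2)\|_{L^2}^2$, and since $\mathbb E(v_1-v_2)=0$ combined with the Dirichlet condition $v_1-v_2=0$ on $\Gamma$ (with $\mathcal H^{d-1}(\Gamma)>0$) forces $v_1=v_2$ via Korn's inequality, strict convexity of $\mathcal G_*$ on $W^{2,p}_\Gamma$ follows. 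I expect the only mildly subtle step to be the handling of the boundary term in the weak lsc argument, since one needs precisely the compactness (not just continuity) of the trace to pass to the limit in the absolute value; this is where $p>1$ enters crucially.
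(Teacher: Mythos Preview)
Your proposal is correct and follows essentially the same approach as the paper: well-definedness via the Sobolev and trace embeddings guaranteed by the assumption on $p$, coercivity from the estimate $\mathcal G_*(v)\ge C_0K^{-p}\|v\|_{W^{2,p}}^p-C_{\mathcal L}\|v\|_{W^{2,p}}$, term-by-term weak lower semicontinuity (with the interfacial term handled by compactness of the trace into $L^1(\partial\Omega)$), and uniqueness by strict convexity. Your uniqueness argument is in fact slightly more detailed than the paper's, which simply asserts strict convexity of $\mathcal G_*$ without spelling out the Korn-plus-Dirichlet step.
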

\begin{proof}
The \AAA conditions $d>p\ge\left(2d/(d+2)\right)\vee \left(dq/(d-1+q)\right)$  and $p>1$ imply that ${dp}/{(d-p)}\ge 2$ and $(d-1)p/(d-p)\ge q$, \BBB therefore we have the Sobolev embedding $W^{1,p}(\Omega;\R^d)\hookrightarrow L^2(\Omega;\R^d)$ and the Sobolev trace embedding \AAA $W^{1,p}(\Omega;\R^d)\hookrightarrow L^{q}(\partial\Omega)$. \AAA These embeddings also hold true if $p\ge d$. We deduce that $\mathbb E(v)\in L^2(\Omega;\R^{d\times d})$ and that \AAA $\mathbb A(v)\in L^q(\partial\Omega;\mathbb R^{d\times d})$ \BBB for every  $v\in W^{2,p}(\Omega;\R^d)$.  Thus $\mathcal G_*(v)$ is finite for every $v\in W^{2,p}(\Omega;\R^d)$.

\MK By \eqref{frame}, \eqref{H}, \eqref{linearload}, and \eqref{Korn}, we get
\begin{equation}
\begin{aligned}\label{Lowerbound}
\mathcal G_*(v)&\ge \int_\Omega H(\nabla^2 v)-C_{\mathcal L} \|v\|_{W^{2,p}(\Omega;\R^d)} 
\ge C_0\int_\Omega |\nabla^2 v|^p-C_{\mathcal L} \|v\|_{W^{2,p}(\Omega;\R^d)}\\
&\ge \frac{C_0}{K^p} \|v\|^p_{W^{2,p}(\Omega;\R^d)}-C_{\mathcal L} \|v\|_{W^{2,p}(\Omega;\R^d)}
\end{aligned}
\end{equation}
and the \MMM right hand side is uniformly bounded from below on $ W^{2,p}_\Gamma(\Omega;\R^d)$ because $p>1$.
\EOR

Let $(v_n)_{n\in\mathbb N}\subset W^{2,p}_\Gamma(\Omega;\R^d)$ be a minimizing sequence. The above estimate, along with \eqref{Korn} and $\mathcal G_*(0)=0$, shows that such a sequence is bounded in $W^{2,p}(\Omega;\R^d)$, hence admitting a (not relabeled) weakly converging subsequence, the limit point being denoted by $v$. However, \MMM the term involving $H$ in functional $\mathcal G_*$ is weakly lower semicontinuous  over $W^{2,p}(\Omega;\R^d)$ thanks to \eqref{H}\BBB, as well as the interfacial term, \AAA thanks to the Sobolev trace embedding $W^{1,p}(\Omega;\R^d)\hookrightarrow L^{q}(\partial\Omega;\R^d)$. Since $p>1$ we also have the compactness of the Sobolev trace embedding $W^{1,p}(\Omega;\R^d)\hookrightarrow L^{1}(\partial\Omega;\R^d)$, which implies that  $v\in W^{2,p}_\Gamma(\Omega;\R^d)$. \BBB  On the other hand,   $\mathcal L$ is weakly continuous over $W^{2,p}(\Omega;\R^d)$, and we have $$\liminf_{n\to+\infty}\int_{\R^d}\mathbb E(v_n)\,D^2W(\mathbf I)\,\mathbb E(v_n)\ge\int_{\Omega}\mathbb E(v)\,D^2W(\mathbf I)\,\mathbb E(v)$$
by the embedding $W^{1,p}(\Omega;\R^d)\hookrightarrow L^2(\Omega;\R^d)$ and by the weak $L^2(\Omega;\mathbb R^{d\times d})$ lower semicontinuity of the map $\mathbf G\mapsto\int_\Omega\mathbf G^T\,D^2W(\mathbf I)\,\mathbf G$. Therefore, $\mathcal G_*$ is lower semicontinuous with respect to the weak $W^{2,p}(\Omega;\R^d)$ convergence. The result follows by the direct method of the calculus of variations.
Uniqueness of the minimizer follows from strict convexity of  $\mathcal G_*$.
\end{proof}

The following is a key lemma, providing the rigorous linearization of the interfacial term. 
\begin{lemma}\label{keylemma}
\AAA Let $p\ge dq/(q+1)$. \BBB Let $v\in W^{2,p}(\Omega;\R^d)$.
Let $(\eps_j)_{j\in \mathbb N}\subset(0,1)$ be a vanishing sequence and let $(v_j)_{j\in\mathbb N}\subset W^{2,p}(\Omega;\R^d)$ be a sequence  such that $\nabla v_j$ weakly converge to $\nabla v$ in $W^{1,p}(\Omega;\R^{d\times d})$ as $j\to+\infty$. Then
\begin{equation}\label{mainlimit}
\liminf_{j\to+\infty}\AAA \frac1{\eps_j^q}\int_{\partial \Omega}\left||\cof(\mathbf I+\eps_j\nabla v_j)\,\mathbf n|-1\right|^q\,dS\ge\int_{\partial\Omega}|\mathbb A(v)\mathbf n \cdot\mathbf n|^q\,dS.\BBB
\end{equation}
\AAA If $\nabla v_j$ strongly converge to $\nabla v$ in $W^{1,p}(\Omega;\R^{d\times d})$ as $j\to+\infty$
we also have
\begin{equation}\label{fully}\lim_{j\to+\infty}\AAA \frac1{\eps_j^q}\int_{\partial \Omega}\left||\cof(\mathbf I+\eps_j\nabla v_j)\,\mathbf n|-1\right|^q\,dS=\int_{\partial\Omega}|\mathbb A(v)\mathbf n \cdot\mathbf n|^q\,dS.\BBB
\BBB\end{equation}
\end{lemma}
\begin{proof}
We have the Sobolev trace embedding $W^{1,p}(\Omega;\R^d)\hookrightarrow L^{(d-1)p/(d-p)}(\partial\Omega;\R^d)$ if $d>p$.
Else if $p\ge d$, we have $W^{1,p}(\Omega;\R^d)\hookrightarrow L^{r}(\partial\Omega)$ for every $r\in[1,+\infty)$.
 Since \AAA $p\ge dq/(q+1)$, we have $(d-1)p/(d-p)\ge (d-1)q$. \BBB Since the entries of $\cof\mathbf F$ are polynomials of degree $d-1$ in the entries of $\mathbf F$,  we deduce that $\cof\nabla \psi\in \AAA L^q(\partial\Omega;\R^{d\times d})\BBB $  for every $\psi\in W^{2,p}(\Omega;\R^d)$. This shows that all the integrands appearing in \eqref{mainlimit} are in $L^1(\partial\Omega)$.

By the Cayley-Hamilton formula, 
we have
\begin{align*}
\cof (\mathbf I+\eps_j\nabla v_j)=\mathbf I+\eps_j\mathbb A(v_j)+\sum_{k=2}^{d-1}\eps_j^k\,\mathbb B_k(v_j)
\end{align*}
where $\mathbb B_k(v_j)$ is a matrix whose entries are polynomials of degree $k$ in the entries of $\nabla v_j$, and the sum is understood to be zero if $d=2$. Letting $$\mathbb B(v_j):=\sum_{k=2}^{d-1}\eps_j^{k-2}\mathbb B_k(v_j),$$ we get therefore
\begin{equation}\label{coro2}
\cof (\mathbf I+\eps_j\nabla v_j)=\mathbf I+\eps_j\mathbb A(v_j)+\eps_j^2\,\mathbb B(v_j).
\end{equation}
We notice that
\begin{equation}\label{cofn}\begin{aligned}
|\cof(\mathbf I+\eps_j\nabla v_j)\,\n|^2&=|\mathbf n+\eps_j\mathbb A(v_j)\,\mathbf n+\eps_j^2\,\mathbb B(v_j)\,\mathbf n|^2\\&=1+2\eps_j\mathbb A(v_j)\,\mathbf n\cdot\n+\eps_j^2\,\mathbb D(v_j)\,\n\cdot\n,
\end{aligned}\end{equation}
where
\begin{equation}\label{Dj}
\mathbb D(v_j):=\mathbb A(v_j)^T\mathbb A(v_j)+2\mathbb B(v_j)+2\eps_j\,\mathbb B(v_j)^T\mathbb A(v_j)+\eps_j^2\,\mathbb B(v_j)^T\,\mathbb B(v_j).
\end{equation}

We observe that the following properties hold:
\begin{equation}\label{sobolev}
\mbox{$\nabla v_j\rightharpoonup \nabla v$   \AAA weakly in $L^q(\partial\Omega;\R^{d\times d})$ as $j\to+\infty$\BBB}, 
\end{equation}
\begin{equation}\label{Asobolev}
\mbox{$\mathbb A(v_j)\rightharpoonup \mathbb A(v)$  \AAA weakly in $L^q(\partial\Omega;\R^{d\times d})$ as $j\to+\infty$\BBB},
\end{equation}
\begin{equation}\label{trace}
\mbox{the sequence $(\mathbb B(v_j))_{j\in\mathbb N}$ is bounded in \AAA $L^q(\partial\Omega;\R^{d\times d})$\BBB}.
\end{equation}
Indeed, \eqref{trace} follows from the Sobolev trace embedding $W^{1,p}(\Omega)\hookrightarrow L^{(d-1)p/(d-p)}(\partial\Omega)$, since \AAA $d>p\ge dq/(q+1)$ \BBB implies \AAA $(d-1)p/(d-p)\ge  (d-1)q$, \BBB and since $\mathbb B(v_j)$ is polynomial of degree $d-1$ in the entries of $\nabla v_j$ (and $\eps_j<1$). Else if $p\ge d$ we have the embedding $W^{1,p}(\Omega)\hookrightarrow L^r(\partial\Omega)$ for every $r\in[1,+\infty)$. \AAA For the same reason, \eqref{sobolev} and its direct consequence \eqref{Asobolev} hold true, since $(d-1)q\ge q$. 	\BBB
 In particular, $\nabla v$ has a \AAA $L^q(\partial\Omega)$ \BBB trace on $\partial\Omega$.

We define
\begin{equation}\label{Qj}
Q_j:=\{x\in\partial\om: |\mathbb A(v_j(x))|+\eps_j|\mathbb B(v_j(x))|<2^{-4}\eps_j^{-1/4}\}
\end{equation}
and we notice that 
\[
S(\partial\Omega\setminus Q_j)\le \int_{\partial\Omega\setminus Q_j}16\eps_j^{1/4}(|\mathbb A(v_j)|+\eps_j|\mathbb B(v_j)|)\,dS\le 16\eps_j^{1/4}\int_{\partial\Omega}(|\mathbb A(v_j)|+\eps_j|\mathbb B(v_j)|)\,dS
\]
so that \eqref{Asobolev} and \eqref{trace} imply that $S(\partial\Omega\setminus Q_j)\to 0$ as $j\to+\infty$.
By using \eqref{Qj}, \eqref{Dj} and $\eps_j<1$ it is not difficult to see that on $Q_j$ there hold
\begin{equation}\label{Q1}
|2\eps_j\,\mathbb A(v_j)\,\n\cdot\n+\eps_j^2\,\mathbb D(v_j)\,\n\cdot\n|\le 2\eps_j\,|\mathbb A(v_j)|+\eps_j^2\,|\mathbb D(v_j)|<\frac12 \eps_j^{3/4}<\frac12,
\end{equation}
\begin{equation}\label{Q2}
\eps_j|\mathbb D(v_j)|\le \sqrt{\eps_j}+2\eps_j|\mathbb B(v_j)|.
\end{equation}
Thanks to \eqref{Q1}, starting from \eqref{cofn} and using
  Taylor series, on $Q_j$ we find that
\begin{equation}\label{long}\begin{aligned}
&|\cof(\mathbf I+\eps_j\nabla v_j)\,\n|-1
=\sqrt{1+2\eps_j\mathbb A(v_j)\,\mathbf n\cdot\n+\eps_j^2\,\mathbb D(v_j)\,\n\cdot\n}-1\\
&\qquad=\eps_j\mathbb A(v_j)\,\n\cdot\n+\frac{\eps_j^2}2\mathbb D(v_j)\,\n\cdot\n+\sum_{k=2}^{+\infty}\alpha_k(2\eps_j\mathbb A(v_j)\,\n\cdot\n+\eps_j^2\,\mathbb D(v_j)\,\n\cdot\n)^k,
\end{aligned}\end{equation}
where
$
\alpha_k:=\dfrac{(-1)^{k-1}(2k)!}{4^k(k!)^2(2k-1)} 
$ (in particular we have $\sum_{k=0}^{+\infty}\alpha_{k+2}\,2^{-k}<+\infty$)
and 
\begin{equation}\label{threelines}\begin{aligned}
&\sum_{k=2}^{+\infty}\alpha_k\,(2\eps_j\mathbb A(v_j)\,\n\cdot\n+\eps_j^2\,\mathbb D(v_j)\,\n\cdot\n)^k\\&\quad=
\eps_j^2\,(2\mathbb A(v_j)\,\n\cdot\n+\eps_j\,\mathbb D(v_j)\,\n\cdot\n)^2\sum_{k=0}^{+\infty}\alpha_{k+2}(2\eps_j\,\mathbb A(v_j)\,\n\cdot\n+\eps_j^2\,\mathbb D(v_j)\,\n\cdot\n)^k\\
&\quad\le \eps_j^{2}\,(2\mathbb A(v_j)\,\n\cdot\n+\eps_j\,\mathbb D(v_j)\,\n\cdot\n)^2\,\sum_{k=0}^{+\infty}\alpha_{k+2}\,2^{-k}\le \eps_j^{3/2}\,\sum_{k=0}^{+\infty}\alpha_{k+2}\,2^{-k}.
\end{aligned}\end{equation}
From \eqref{long} we have
\begin{equation*}\begin{aligned}
\bb_{Q_j}\,\frac{|\cof(\mathbf I+\eps_j\nabla v_j)\,\n|-1}{\eps_j}&=\bb_{Q_j}\, \mathbb A(v_j)\,\n\cdot\n+\bb_{Q_j}\,\frac{\eps_j}2\mathbb D(v_j)\,\n\cdot\n\\&\qquad+\bb_{Q_j}\,\frac1{\eps_j}\sum_{k=2}^{+\infty}\alpha_k(2\eps_j\mathbb A(v_j)\,\n\cdot\n+\eps_j^2\,\mathbb D(v_j)\,\n\cdot\n)^k,
\end{aligned}\end{equation*}
where we may notice that the  two terms on the right hand side converge strongly to $0$ in \AAA $L^q(\partial\Omega)$, \BBB thanks to \eqref{trace}, \eqref{Q2} and \eqref{threelines}.
\MMM
Since $S(\partial\Omega\setminus Q_j)\to 0$ and since \eqref{Asobolev} holds, we deduce \AAA by the equiintegrability of $(\mathbb A(v_j))_{j\in\mathbb N}$ that $\bb_{Q_j}\mathbb A(v_j)\,\n\cdot\n\rightharpoonup \mathbb A(v)\,\n\cdot\n$ weakly in $L^q(\partial\Omega)$ \BBB and thus 
\begin{equation}\label{coro}
\bb_{Q_j}\,\frac{|\cof(\mathbf I+\eps_j\nabla v_j)\,\n|-1}{\eps_j}\rightharpoonup \mathbb A(v)\,\n\cdot\n \quad\mbox{\AAA weakly in $L^q(\partial\Omega)$ \BBB as $j\to+\infty$}
\end{equation}
so that \KKK
\begin{equation*}
\liminf_{j\to+\infty}\AAA \frac1{\eps_j^q}\int_{Q_j}||\cof(\mathbf I+\eps_j\nabla v_j)\,\n|-1|^q\,dS\ge \int_{\partial\Omega}|\mathbb A(v)\,\n\cdot\n|^q\,dS.\BBB
\end{equation*}
In order to conclude, we are left to prove that 
\begin{equation}\label{outQj}
\lim_{j\to+\infty}\AAA \frac1{\eps_j^q}\int_{\partial\Omega\setminus Q_j}||\cof(\mathbf I+\eps_j\nabla v_j)\,\n|-1|^q\,dS=0.\BBB
\end{equation}
But \eqref{cofn} yields \[\begin{aligned}
\frac1{\eps_j}\,||\cof(\mathbf I+\eps_j\nabla v_j)\,\n|-1|&=\frac{1}{\eps_j}\,||\n+\eps_j\mathbb A(v_j)\n+\eps_j^2\,\mathbb B(v_j)\,\n|-|\n||\\&\le\frac{1}{\eps_j}\,|\eps_j\,\mathbb A(v_j)\,\n+\eps_j^2\,\mathbb B(v_j)\,\n| \le|\mathbb A(v_j)|+\eps_j|\mathbb B(v_j)|,
\end{aligned}\]
and since $S(\partial\Omega\setminus Q_j)\to 0$, \eqref{outQj} follows from \eqref{Asobolev} and \eqref{trace}.

\AAA Eventually, if we also have strong $W^{1,p}(\Omega;\R^{d\times d})$ convergence of $\nabla v_j$ to $\nabla v$, we obtain strong convergence in \eqref{sobolev} and \eqref{Asobolev}, thus in \eqref{coro}, so that by taking \eqref{outQj} into account we deduce \eqref{fully}. \BBB
\end{proof}

\MMM
\begin{corollary}\label{corolast} \AAA Let $p\ge dq/(q+1)$. \BBB. Let $v\in W^{2,p}(\Omega;\R^d)$.
Let $(\eps_j)_{j\in \mathbb N}\subset(0,1)$ be a vanishing sequence and let $(v_j)_{j\in\mathbb N}\subset W^{2,p}(\Omega;\R^d)$ be a sequence  such that $\nabla v_j$ weakly converge to $\nabla v$ in $W^{1,p}(\Omega;\R^{d\times d})$ as $j\to+\infty$. Then
\begin{equation}\label{secondboundarylimit}
\lim_{j\to+\infty}\AAA \frac1{\eps_j^q}\left|\int_{\partial\Omega}|\cof(\mathbf I+\eps_j\nabla v_j)\,\n|dS-|\partial\Omega|\right|^q=\left|\int_{\partial\Omega}\mathbb A(v)\,\n\cdot\n\,dS\right|^q\BBB
\end{equation}
and
\begin{equation}\label{thirdboundarylimit}
\AAA \liminf_{j\to+\infty}\frac1{\eps_j^q}\int_{\partial\Omega}|(\cof(\mathbf I+\eps_j\nabla v_j)-\mathbf I)\,\n|^q\,dS=\int_{\partial\Omega}|\mathbb A(v)\,\n|^q\,dS. \BBB
\end{equation}
\AAA If $\nabla v_j$ strongly converge to $\nabla v$ in $W^{1,p}(\Omega;\R^{d\times d})$ as $j\to+\infty$, 
we also have
\begin{equation}\label{fourthboundarylimit}
\AAA \lim_{j\to+\infty}\frac1{\eps_j^q}\int_{\partial\Omega}|(\cof(\mathbf I+\eps_j\nabla v_j)-\mathbf I)\,\n|^q\,dS=\int_{\partial\Omega}|\mathbb A(v)\,\n|^q\,dS. \BBB
\end{equation}
\end{corollary}
\begin{proof}
By following the proof of Lemma \ref{keylemma}, with $Q_j$ still defined by \eqref{Qj}, we see that
 that \eqref{secondboundarylimit} follows from \eqref{coro} and \eqref{outQj}. On the other hand, \eqref{coro2} implies that
\[
\frac1{\eps_j}(\cof(\mathbf I+\eps_j\nabla v_j)-\mathbf I)\,\n=\mathbb A(v_j)\,\n+\eps_j\mathbb B(v_j)\,\n
\]
so that \eqref{thirdboundarylimit} follows from \eqref{Asobolev} and \eqref{trace}. \AAA If $\nabla v_j$ strongly converge to $\nabla v$ in $W^{1,p}(\Omega;\R^{d\times d})$ as $j\to+\infty$, then there is strong convergence in \eqref{Asobolev} so that \eqref{fourthboundarylimit} follows. \BBB
\end{proof}

\KKK

We next prove existence of minimizers $\mathcal G_\eps$, for fixed $\eps>0$.

\MK
\begin{lemma}\label{newexist}
\AAA Let $p\ge dq/(q+1)$. If $d=2$ and $q=1$, assume in addition that $p>1$\BBB. Suppose that $\mathcal L$ is  a bounded linear  functional on $W^{2,p}(\Omega;\R^d)$,  that $W$ satisfies \eqref{frame} and \eqref{coerc}, and that \eqref{H} hold.
Then the functional $\mathcal{G}_\varepsilon$ attains a minimum on $W^{2,p}_\Gamma(\O;\R^d)$ for every $\varepsilon>0$.
\end{lemma}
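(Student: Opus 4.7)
The plan is a standard application of the direct method, leveraging the analytical ingredients already established for Lemma \ref{exist} and Lemma \ref{keylemma}. First I would establish coercivity: since $H$ is positively $p$-homogeneous, we have $\frac{1}{\varepsilon^p}\int_\Omega H(\varepsilon \nabla^2 v)\,dx=\int_\Omega H(\nabla^2 v)\,dx\ge C_0 \|\nabla^2 v\|_{L^p}^p$, while the elastic bulk and surface tension terms are nonnegative and $\mathcal L$ is bounded on $W^{2,p}$. Combining with Poincar\'e's inequality \eqref{Korn} (valid because of the Dirichlet boundary condition on $\Gamma$), this yields a lower bound of the form
\[
\mathcal G_\varepsilon(v)\ge \frac{C_0}{K^p}\|v\|_{W^{2,p}(\Omega;\R^d)}^p -C_{\mathcal L}\|v\|_{W^{2,p}(\Omega;\R^d)},
\]
completely analogous to \eqref{Lowerbound}. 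In particular $\mathcal G_\varepsilon$ is bounded from below, and any minimizing sequence $(v_n)_{n\in\mathbb N}\subset W^{2,p}_\Gamma(\Omega;\R^d)$ is bounded in $W^{2,p}(\Omega;\R^d)$. We may therefore extract a (not relabeled) subsequence with $v_n\rightharpoonup v$ weakly in $W^{2,p}(\Omega;\R^d)$, and $v\in W^{2,p}_\Gamma(\Omega;\R^d)$ by weak closedness of the Dirichlet subspace.

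Next I would verify that each of the four terms of $\mathcal G_\varepsilon$ is sequentially weakly lower semicontinuous along this subsequence. The term $\int_\Omega H(\nabla^2 v)\,dx$ is weakly lower semicontinuous on $W^{2,p}(\Omega;\R^d)$ by convexity of $H$. The linear load $\mathcal L$ is weakly continuous by assumption. For the bulk elastic term, the compact embedding $W^{2,p}(\Omega;\R^d)\hookrightarrow\hookrightarrow W^{1,p}(\Omega;\R^d)$ gives $\nabla v_n\to\nabla v$ strongly in $L^p$, hence pointwise a.e.\ on a further subsequence; continuity and nonnegativity of $W$ together with Fatou's lemma then give
\[
\int_\Omega W(\mathbf I+\varepsilon \nabla v)\,dx \le \liminf_{n\to+\infty}\int_\Omega W(\mathbf I+\varepsilon \nabla v_n)\,dx.
\]

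The only nontrivial step is the surface tension term, where I would reuse the trace estimates from Lemma \ref{keylemma}. The assumption $p>d/2$ implies $(d-1)p/(d-p)>d-1$ (and any exponent if $p\ge d$), so the Sobolev trace embedding $W^{1,p}(\Omega;\R^{d\times d})\hookrightarrow\hookrightarrow L^{d-1}(\partial\Omega;\R^{d\times d})$ is compact. Hence $\nabla v_n\to \nabla v$ strongly in $L^{d-1}(\partial\Omega;\R^{d\times d})$, and since the entries of $\cof\mathbf F$ are polynomials of degree $d-1$ in those of $\mathbf F$, we obtain $\cof(\mathbf I+\varepsilon\nabla v_n)\to \cof(\mathbf I+\varepsilon\nabla v)$ in $L^1(\partial\Omega;\R^{d\times d})$. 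Applying the reverse triangle inequality to the map $\mathbf A\mapsto||\mathbf A\mathbf n|-1|$ gives strong $L^1(\partial\Omega)$ convergence of the interfacial integrand, so the surface term is in fact continuous along the subsequence. Thus $\mathcal G_\varepsilon(v)\le \liminf_{n\to+\infty}\mathcal G_\varepsilon(v_n)$, and $v$ is a minimizer. The only subtlety worth flagging is the need for $p>d/2$ precisely to control the degree $d-1$ polynomial $\cof$ on the boundary; everything else is a textbook direct-method argument.
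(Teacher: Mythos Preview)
Your proof is correct and follows essentially the same approach as the paper's: coercivity via the $H$-term and the Poincar\'e inequality \eqref{Korn}, weak compactness of a minimizing sequence, Fatou for the bulk $W$-term after extracting pointwise a.e.\ convergence of gradients, convexity for the $H$-term, weak continuity of $\mathcal L$, and continuity of the interfacial term through the compact trace embedding $W^{1,p}\hookrightarrow\hookrightarrow L^{d-1}(\partial\Omega)$ together with the $(d{-}1)$-homogeneity of the cofactor. The only cosmetic difference is that the paper fixes $\varepsilon=1$ without loss of generality, whereas you invoke the positive $p$-homogeneity of $H$ to eliminate the $\varepsilon$-dependence explicitly.
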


\begin{proof} We assume wlog that $\eps=1$.
\MMM For every $v\in W^{2,p}_\Gamma(\Omega;\R^d)$, by \eqref{H},  \eqref{linearload} and \eqref{Korn} there holds
\[
\mathcal G_1(v)\ge C_0\int_\Omega|\nabla^2 v|^p-\mathcal L(v)\ge C_0\int_\Omega|\nabla^2 v|^p-K C_{\mathcal L}\|\nabla^2v\|_{L^p(\Omega;\R^{d\times d\times d})}.
\]
 Similarly to the proof of Lemma \ref{exist}, \AAA since $p>1$ \BBB this estimate implies boundedness from below of functional $\mathcal G_1$ on $W^{2,p}_\Gamma(\Omega;\R^d)$,  
and since $\mathcal G_1(0)=0$, it implies that any minimizing sequence $(v_n)_{n\in\mathbb N}\subset W^{2,p}_\Gamma(\Omega;\R^d)$ is bounded in $W^{2,p}(\Omega;\R^d)$, thus  weakly converging up to subsequences to some $v\in W^{2,p}_\Gamma(\Omega;\R^d)$. This yields up to subsequences pointwise a.e. convergence of $\nabla v_n$ to $\nabla v$, so that since $W$ is continuous and nonnegative, Fatou Lemma implies lower semicontinuity of the term involving $W$ in functional $\mathcal G_1$ along the sequence $(v_n)_{n\in\mathbb N}$.   
On the other hand, $\mathcal L$ is obviously continuous along such sequence, while
\[
\liminf_{n\to+\infty}\int_\Omega H(\nabla^2v_n)\ge\int_\Omega H(\nabla^2v)
\]
is a consequence of the convexity of $H$ from \eqref{H}.
 Finally, 
 \AAA  since $p>1$, the Sobolev trace embedding $W^{1,p}(\Omega;\R^d) \hookrightarrow L^1(\partial\Omega;\R^d)$ \BBB is compact, thus we have pointwise $S-a.e.$ convergence of $\nabla v_n$ to $\nabla v$ on $\partial\Omega$ (up to a subsequences), and  by Fatou Lemma we get lower semicontinuity of the interfacial term of functional $\mathcal G_1$. \BBB
\end{proof}
\EOR

By means of the next two lemmas, we prove the $\Gamma$-convergence

\begin{lemma}[\textbf{$\Gamma$-limsup}] \label{limsup} \MMM \AAA Let $p\ge dq/(q+1)$. \BBB Suppose that $\mathcal L$ is  a bounded linear  functional on $W^{2,p}(\Omega;\R^d)$,  that $W$ satisfies \eqref{frame} and \eqref{coerc}, and that \eqref{H} holds. \KKK 
Let $(\eps_j)_{j\in\mathbb N}\subset(0,1)$ be a vanishing sequence.
Let $v\in W^{2,p}_\Gamma(\Omega;\R^d)$. There exists a sequence $(v_j)_{j\in\mathbb N}\subset C^{\infty}(\overline \Omega;\R^d)\cap W^{2,p}_\Gamma(\Omega;\R^d)$ such that
\[
v_j\to v\quad\mbox{strongly in $W^{2,p}(\Omega;\R^d)$ as $j\to+\infty$}
\]
and
\[
\lim_{j\to+\infty}\mathcal G_{\eps_j}(v_j)=\mathcal G_*(v).
\]
\end{lemma}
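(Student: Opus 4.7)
\emph{Approach.} I would argue by density and diagonalization. By the very definition of $W^{2,p}_\Gamma(\Omega;\R^d)$, there exists a sequence $(w_k)_{k\in\N}$ of restrictions to $\Omega$ of $C^\infty_c(\R^d\setminus\Gamma;\R^d)$ functions such that $w_k\to v$ strongly in $W^{2,p}(\Omega;\R^d)$ as $k\to+\infty$. Each $w_k$ lies in $C^\infty(\overline\Omega;\R^d)\cap W^{2,p}_\Gamma(\Omega;\R^d)$, with $\nabla w_k$ and $\nabla^2 w_k$ in $L^\infty(\Omega)$. Moreover, since $p>d/2$, the Sobolev embeddings invoked in the proof of Lemma \ref{exist} apply, and $\mathcal G_*$ is continuous with respect to strong $W^{2,p}$ convergence (the quadratic bulk form via $W^{1,p}\hookrightarrow L^2$, the $H$-term by dominated convergence after extracting an a.e.-convergent subsequence, the load by assumption, and the interfacial term via the trace $W^{1,p}\hookrightarrow L^1(\partial\Omega)$). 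In particular $\mathcal G_*(w_k)\to\mathcal G_*(v)$.

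\emph{Term-by-term limit at smooth data.} For each fixed $k$, I would show that $\mathcal G_{\eps_j}(w_k)\to\mathcal G_*(w_k)$ as $j\to+\infty$. For the bulk stored-energy term, since $W\in C^2(\UU)$, $W(\mathbf I)=DW(\mathbf I)=0$, and $\eps_j\|\nabla w_k\|_{L^\infty}\to 0$, for large $j$ the argument $\mathbf I+\eps_j\nabla w_k(x)$ stays inside a compact subset of $\UU$ uniformly in $x\in\overline\Omega$. A second-order Taylor expansion together with uniform continuity of $D^2W$ on such a compact set yields
\[
\frac{W(\mathbf I+\eps_j\nabla w_k(x))}{\eps_j^2}=\tfrac12\,\nabla w_k(x)^\TT D^2W(\mathbf I)\nabla w_k(x)+r_j(x),
\]
with $\|r_j\|_{L^\infty(\Omega)}\to 0$; integrating on the bounded set $\Omega$ and invoking frame indifference (which reduces $\nabla w_k^\TT D^2W(\mathbf I)\nabla w_k$ to $\mathbb E(w_k)\,D^2W(\mathbf I)\,\mathbb E(w_k)$) produces the desired linear-elastic limit. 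The positive $p$-homogeneity of $H$ in \eqref{H} makes the second-gradient term identically equal to $\int_\Omega H(\nabla^2 w_k)$ for every $j$, and $\mathcal L(w_k)$ is $j$-independent. Finally, the surface-tension term converges to $\gamma\int_{\partial\Omega}|\mathbb A(w_k)\n\cdot\n|\,dS$ directly from Lemma \ref{keylemma} applied to the constant sequence $v_j\equiv w_k$.

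\emph{Diagonalization and main obstacle.} Combining the two steps, I would choose, for each $j\in\N$, an index $k(j)$ such that $\|w_{k(j)}-v\|_{W^{2,p}(\Omega;\R^d)}+|\mathcal G_{\eps_j}(w_{k(j)})-\mathcal G_*(w_{k(j)})|<1/j$; this is possible by the previous paragraphs. Setting $v_j:=w_{k(j)}$ yields a sequence in $C^\infty(\overline\Omega;\R^d)\cap W^{2,p}_\Gamma(\Omega;\R^d)$ that converges strongly to $v$ in $W^{2,p}(\Omega;\R^d)$ and satisfies $\mathcal G_{\eps_j}(v_j)\to\mathcal G_*(v)$. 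I do not anticipate any serious difficulty: the delicate linearization of the interfacial functional has already been carried out in Lemma \ref{keylemma}, and the remaining ingredients reduce to a standard Taylor expansion for the bulk energy, the exact homogeneity of the $H$-term, and the $j$-independence of the load.
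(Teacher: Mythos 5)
Your approach (approximation by smooth functions, pointwise convergence of $\mathcal G_{\eps_j}$ at fixed smooth data, continuity of $\mathcal G_*$, and a diagonal extraction) is conceptually the same as the paper's. However, the diagonalization step as you state it has a real gap. You claim that for every $j$ there is an index $k(j)$ satisfying simultaneously
\[
\|w_{k(j)}-v\|_{W^{2,p}(\Omega;\R^d)}<\tfrac1{2j}
\qquad\text{and}\qquad
|\mathcal G_{\eps_j}(w_{k(j)})-\mathcal G_*(w_{k(j)})|<\tfrac1{2j},
\]
"by the previous paragraphs." But the previous paragraphs only give $\mathcal G_{\eps_j}(w_k)\to\mathcal G_*(w_k)$ \emph{as $j\to+\infty$ for each fixed $k$}. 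For a fixed $j$, you have no control on $|\mathcal G_{\eps_j}(w_k)-\mathcal G_*(w_k)|$ as $k\to+\infty$: when $d/2<p\le d$ the embedding $W^{2,p}\hookrightarrow W^{1,\infty}$ fails, so $\|\nabla w_k\|_{C^0(\overline\Omega)}$ may blow up with $k$, and then $\mathbf I+\eps_j\nabla w_k$ may leave the neighborhood $\mathcal U$ where $W$ is $C^2$; the bulk term of $\mathcal G_{\eps_j}(w_k)$ can even be $+\infty$. Meanwhile the first requirement forces $k(j)$ to be large. Nothing in your two preliminary steps guarantees that the set of admissible $k$ is nonempty for a given $j$.

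The correct formulation of the diagonal extraction goes the other way round: for each $k$, pick $J(k)$, which one may take strictly increasing, such that $|\mathcal G_{\eps_j}(w_k)-\mathcal G_*(w_k)|<1/k$ for all $j\ge J(k)$, and then set $v_j:=w_k$ for $j\in[J(k),J(k+1))$. This forces $k(j)\to+\infty$ \emph{slowly}, exactly slowly enough that $\eps_j\|\nabla v_j\|_{C^0(\overline\Omega)}\to 0$; combined with $\mathcal G_*(w_k)\to\mathcal G_*(v)$ it yields the desired conclusion. This is precisely what the paper does, via the explicit construction that repeats each approximant $\tilde v_k$ a suitable number of times so that \eqref{C0} holds. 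The remaining ingredients of your argument (Taylor expansion with modulus of continuity of $D^2W$, exact $p$-homogeneity of $H$, Lemma \ref{keylemma} for the interfacial term, continuity of $\mathcal G_*$ under strong $W^{2,p}$ convergence) are all correct and match the paper.
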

\begin{proof}
By the density of $C^{\infty}(\overline \Omega;\R^d)\cap W^{2,p}_\Gamma(\Omega;\R^d)$ in $W^{2,p}_\Gamma(\Omega;\R^d)$, with respect to the $W^{2,p}(\Omega,\R^d)$ convergence, there exists a sequence $(\tilde v_j)_{j\in\mathbb N}\subset C^{\infty}(\overline \Omega;\R^d)\cap W^{2,p}_\Gamma(\Omega;\R^d)$ such that $\tilde v_j\to v$ strongly in $W^{2,p}(\Omega;\R^d)$ as $j\to+\infty$.
We suppose wlog that the sequence $(\|\nabla \tilde v_j\|_{C^0(\overline \Omega;\R^{d\times d})})_{j\in\mathbb N}$ is nondecreasing. We notice that it is converging if $p>d$, thanks to the Sobolev embedding $W^{1,p}(\Omega;\R^d)\hookrightarrow C^0(\overline \Omega;\R^d)$ holding in such a case. In any case, if it is converging we let $v_j=\tilde v_j$ for every $j\in\mathbb N$. Else if it is diverging,  we define a new sequence  sequence $(v_j)_{j\in\mathbb N}\subset C^{\infty}(\overline \Omega;\R^d)\cap W^{2,p}_\Gamma(\Omega;\R^d)$  as follows:
\[
\{\tilde v_1,\tilde v_1,\,\ldots, \tilde v_1,\;\tilde v_2,\tilde v_2,\,\ldots,\tilde v_2,\;\tilde v_3,\tilde v_3,\,\ldots,\tilde v_3,\;\tilde v_4,\tilde v_4\ldots\},
\] 
where each of the $\tilde v_k$'s is repeated $t_k$ times, where $t_1$ is defined as the smallest positive integer such that $$\eps_{i}\|\nabla\tilde v_2\|_{C^0(\overline\Omega;\R^{d\times d})}<\frac12\quad\mbox {for every integer $i\ge1+t_1$}, $$ whose existence is ensured by the fact that the sequence $(\eps_j)_{j\in\mathbb N}$ is vanishing, and where
$t_k$ is then recursively defined as the smallest positive integer such that
\[
\eps_i\|\nabla\tilde v_{k+1}\|_{C^0(\overline\Omega;\R^{d\times d})}<\frac1k\quad\mbox{ for every integer $i\ge 1+t_1+\ldots+t_k$}.
\]
It is clear that we have $v_j\to v$ strongly in $W^{2,p}(\Omega;\R^d)$ as $j\to+\infty$. Moreover, by construction we also have 
\begin{equation}\label{C0}
\lim_{j\to+\infty} \eps_j\|\nabla v_j\|_{C^0(\overline\Omega;\R^{d\times d})}=0.
\end{equation}
 
We notice that the assumptions on the strain energy density $W$ imply that
\begin{equation*}
\left|W(\mathbf I+\mathbf F)-\frac12 \mathbf F^T\,D^2W(\mathbf I)\,\mathbf F\right|\le \omega(|\mathbf F|)\,|\mathbf F|^2
\end{equation*}
for every $\mathbf F\in\mathcal {\tilde U}\subset\subset\mathcal U$, where $\omega:[0,+\infty)\to[0,+\infty)$ denotes the modulus of uniform continuity of $D^2W$ on $\mathcal {\tilde U}$, which is a continuous nondecreasing function that vanishes at $0$. But  \eqref{C0} implies that, for every large enough $j$, we have $\eps_j\nabla v_j(x)\in\mathcal {\tilde U}$ for every $x\in\overline\Omega$. Therefore, if $j$ is large enough, we get
\begin{equation}\label{modulus}
\left|\frac1{\eps_j^2}W(\mathbf I+\eps_j\nabla v_j(x))-\frac12 \mathbb E(v_j(x))\,D^2W(\mathbf I)\,\mathbb E(v_j(x))\right|\le \omega(\eps_j|\nabla v_j(x)|)\,|\nabla v_j(x)|^2
\end{equation}
for every $x\in\overline\Omega$, having used the fact that by frame indifference $D^2 W(\mathbf I)$ acts as a quadratic form only on the symmetric part of $\nabla v_j$.

We notice that, since \AAA $p\ge dq/(q+1)\ge d/2$, \BBB if $d>p$ we have $p_*:=dp/(d-p)\ge2$, therefore we have the  embedding $W^{1,p}(\Omega;\mathbb R^d)\hookrightarrow L^2(\Omega)$, still holding if $p\ge d$. Therefore $\mathbb E (v_j)\to\mathbb E( v)$ strongly in $L^2(\Omega;\R^{d\times d})$. Hence,
\[
\begin{aligned}
&\int_\Omega\left|\frac1{\eps_j^2}W(\mathbf I+\eps_j\nabla v_j)-\frac12\mathbb E(v)\,D^2W(\mathbf I)\,\mathbb E(v)\right|\le \int_\Omega\left|\frac1{\eps_j^2}W(\mathbf I+\eps_j\nabla v_j)-\frac12\mathbb E(v_j)\,D^2W(\mathbf I)\,\mathbb E(v_j)\right|
\\
&\qquad\quad+\int_\Omega\left|\frac12\mathbb E(v_j)\,D^2W(\mathbf I)\,\mathbb E(v_j)-\frac12\mathbb E(v)\,D^2W(\mathbf I)\,\mathbb E(v)\right|
\end{aligned}
\]
and the second term in the right hand side goes to $0$ as $j\to+\infty$. On the other hand, the first term in the right hand side gets estimated by means of \eqref{modulus} as
\[
\int_\Omega\left|\frac1{\eps_j^2}W(\mathbf I+\eps_j\nabla v_j)-\frac12\mathbb E(v_j)\,D^2W(\mathbf I)\,\mathbb E(v_j)\right|\le \omega(\eps_j\|\nabla v_j\|_{C^0(\overline\Omega;\R^{d\times d})})\int_\Omega|\nabla v_j|^2
\]
and so it vanishes as well as $j\to+\infty$, thanks to \eqref{C0} and to the boundedness of $(\nabla v_j)_{j\in\mathbb N}$ in $L^2(\Omega;\R^{d,\times d})$, since $\omega(t)\to 0$ as $t\to 0$.
We conclude that 
\[
\lim_{j\to+\infty}\frac1{\eps_j^2}\int_\Omega W(\mathbf I+\eps_j\nabla v_j)=\frac12\int_\Omega \mathbb E(v)\,D^2W(\mathbf I)\,\mathbb E(v).
\]

With respect to the strong $W^{2,p}$ convergence, the interfacial term is continuous by Lemma \ref{keylemma}, while  the load term  and the second gradient term are obviously continuous. The proof is concluded. 
\end{proof}

\begin{lemma}[\textbf{$\Gamma$-liminf}] \label{liminf} \MMM \AAA Let $p\ge dq/(q+1)$. \BBB Suppose that $\mathcal L$ is  a bounded linear  functional on $W^{2,p}(\Omega;\R^d)$,  that $W$ satisfies \eqref{frame} and \eqref{coerc}, and that \eqref{H} holds. \KKK
Let $(\eps_j)_{j\in\mathbb N}\subset(0,1)$ be a vanishing sequence.
Let $v\in W^{2,p}_\Gamma(\Omega;\R^d)$. Let $(v_j)_{j\in\mathbb N}\subset W^{2,p}_\Gamma(\Omega;\R^d)$ be a sequence that weakly converges to $v$ in $W^{2,p}(\Omega;\R^d)$. Then
\[
\liminf_{j\to+\infty}\mathcal G_{\eps_j}(v_j)\ge\mathcal G_*(v).
\]
\end{lemma}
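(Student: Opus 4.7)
The plan is to split $\mathcal G_{\eps_j}(v_j)$ into its four additive contributions and establish a liminf inequality for each. The load term $\mathcal L(v_j)$ converges to $\mathcal L(v)$ by weak continuity of bounded linear functionals on $W^{2,p}(\Omega;\R^d)$. For the second-gradient term, positive $p$-homogeneity of $H$ yields $\eps_j^{-p}\int_\Omega H(\eps_j \nabla^2 v_j) = \int_\Omega H(\nabla^2 v_j)$, and the convexity and continuity of $H$ from \eqref{H} give $\liminf_j \int_\Omega H(\nabla^2 v_j) \ge \int_\Omega H(\nabla^2 v)$ by the standard weak lower semicontinuity of convex integrands on $L^p$. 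The interfacial term is actually \emph{continuous} along the sequence: weak $W^{2,p}$ convergence of $v_j$ to $v$ implies weak $W^{1,p}$ convergence of $\nabla v_j$ to $\nabla v$, so the hypotheses of Lemma \ref{keylemma} are met and the limit $\gamma\int_{\partial\Omega}|\mathbb A(v)\,\n\cdot\n|\,dS$ is attained.

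The substantive step, and the main obstacle, is the liminf inequality for the bulk elastic term
\[
\liminf_{j\to+\infty} \frac{1}{\eps_j^2}\int_\Omega W(\mathbf I + \eps_j \nabla v_j)\,dx \;\ge\; \frac{1}{2}\int_\Omega \mathbb E(v)\,D^2W(\mathbf I)\,\mathbb E(v)\,dx.
\]
I would adopt the now-classical truncation--Taylor approach. Pick $\delta_0>0$ small enough that the closed $\delta_0$-neighborhood of $\mathbf I$ is relatively compactly contained in the neighborhood $\mathcal U$ from \eqref{frame}, and set $A_j := \{x\in\Omega : \eps_j|\nabla v_j(x)|\le \delta_0\}$. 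Using $W\ge 0$ together with a Taylor expansion around $\mathbf I$ (invoking $DW(\mathbf I)=0$ and the frame-indifference reduction of $D^2W(\mathbf I)$ to symmetric parts, as already exploited in the proof of Lemma \ref{limsup}) one obtains
\[
\frac{1}{\eps_j^2}\int_\Omega W(\mathbf I + \eps_j \nabla v_j) \;\ge\; \frac{1}{2}\int_{A_j}\mathbb E(v_j)\,D^2 W(\mathbf I)\,\mathbb E(v_j)\,dx \;-\;\int_{A_j}\omega(\eps_j|\nabla v_j|)|\nabla v_j|^2\,dx,
\]
where $\omega$ is the modulus of uniform continuity of $D^2 W$ on the chosen neighborhood and satisfies $\omega(t)\to 0$ as $t\to 0^+$.

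To pass to the limit in the main term, I would extract a subsequence along which $\nabla v_j \to \nabla v$ pointwise a.e., using the compact embedding $W^{2,p}\hookrightarrow\hookrightarrow W^{1,p}$. Since $p>d/2$ and $d\ge 2$, the Sobolev embedding $W^{1,p}(\Omega)\hookrightarrow L^q(\Omega)$ holds for some $q>2$, so $(\nabla v_j)$ is uniformly bounded in $L^q$ and hence equi-integrable in $L^2$; Vitali's theorem upgrades the pointwise convergence to strong $L^2$ convergence $\mathbb E(v_j)\to\mathbb E(v)$. Because $\eps_j|\nabla v_j|\to 0$ a.e.\ along the subsequence, $\bb_{A_j}\to 1$ a.e., so $\bb_{A_j}\mathbb E(v_j)\to\mathbb E(v)$ strongly in $L^2(\Omega;\R^{d\times d})$, and the boundedness of the tensor $D^2W(\mathbf I)$ transfers this into convergence of the quadratic-form integral to $\int_\Omega \mathbb E(v)\,D^2W(\mathbf I)\,\mathbb E(v)$. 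The residual $\int_{A_j}\omega(\eps_j|\nabla v_j|)|\nabla v_j|^2$ is shown to vanish by splitting $A_j$ according to $\{|\nabla v_j|\le M\}$ and its complement: on the former, $\omega(\eps_j|\nabla v_j|)\le \omega(\eps_j M)\to 0$ for any fixed $M$, combined with the uniform $L^2$ bound; on the latter, $\omega\le\omega(\delta_0)$ is finite while $L^2$-equi-integrability makes $\int_{\{|\nabla v_j|>M\}}|\nabla v_j|^2$ arbitrarily small uniformly in $j$ by taking $M$ large. Taking $\liminf_j$ and then $M\to+\infty$ closes the bulk estimate, and summing the four liminf inequalities gives $\liminf_j \mathcal G_{\eps_j}(v_j)\ge \mathcal G_*(v)$.
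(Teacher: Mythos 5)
Your argument is correct and follows essentially the same overall strategy as the paper: split the functional into its four additive parts, handle the load, second-gradient, and interfacial terms by weak continuity/lower semicontinuity and Lemma~\ref{keylemma}, and reduce the bulk elastic term to a truncation plus Taylor expansion around the identity. The one place where you genuinely diverge from the paper is the choice of truncation set and the consequent treatment of the error term. You fix a small radius $\delta_0$ once and for all, set $A_j:=\{\eps_j|\nabla v_j|\le\delta_0\}$, and then must pay for the fact that $\omega(\eps_j|\nabla v_j|)$ is only bounded by the constant $\omega(\delta_0)$ on $A_j$: this forces the extra $M$-splitting argument together with $L^2$-equi-integrability of $(\nabla v_j)$, which you obtain from the $L^q$ bound with $q>2$. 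The paper instead uses a moving threshold, setting $H_j:=\{\sqrt{\eps_j}\,|\nabla v_j|<1\}$; on $H_j$ one immediately has $\omega(\eps_j|\nabla v_j|)\le\omega(\sqrt{\eps_j})\to 0$, and the residual $\omega(\sqrt{\eps_j})\int_\Omega|\nabla v_j|^2$ vanishes by $L^2$-boundedness alone, with no equi-integrability and no $M$-splitting needed, while $|\Omega\setminus H_j|\le\sqrt{\eps_j}\int_\Omega|\nabla v_j|\to 0$ for free. The paper also reaches the convergence of the main quadratic term more directly, using the compact embedding $W^{1,p}(\Omega)\hookrightarrow\hookrightarrow L^2(\Omega)$ (valid since $p>d/2$ gives $dp/(d-p)>2$) to get strong $L^2$ convergence of $\nabla v_j$, rather than detouring through pointwise a.e.\ convergence and Vitali's theorem. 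Both routes are rigorous; yours is the classical fixed-truncation scheme and costs an extra paragraph, while the paper's moving threshold is chosen precisely to collapse the error estimate to a single line. One small point worth making explicit in your write-up: since you extract a subsequence to get a.e.\ convergence of $\nabla v_j$, you should first pass to a subsequence realizing $\liminf_j\mathcal G_{\eps_j}(v_j)$ as a limit, so that the lower bound obtained along the further subsequence indeed controls the original $\liminf$.
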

\begin{proof}
Similarly to the proof of Lemma \ref{limsup}, 
we have the  embedding of $W^{1,p}(\Omega)$ in $L^2(\Omega)$. Therefore $\nabla v_j\to\nabla v $ \AAA weakly \BBB in $L^2(\Omega;\R^{d\times d})$.
Let 
\[
H_j:=\{x\in\Omega: \sqrt{\eps_j}|\nabla v_j(x)|<1\},
\]
so that 
\[
|\Omega\setminus H_j|\le \int_{\Omega\setminus H_j}\sqrt{\eps_j}|\nabla v_j|\le\sqrt{\eps_j}\int_\Omega|\nabla v_j|,
\]
thus $|\Omega\setminus H_j|\to 0$ as $j\to+\infty$. For every $x\in H_j$ we have $\eps_j|\nabla v_j(x)|<\sqrt{\eps_j}$ and thus \eqref{modulus} holds for every large enough $j$, yielding
\[\begin{aligned}
&\liminf_{j\to+\infty}\frac1{\eps_j^2}\int_\Omega W(\mathbf I+\eps_j\nabla v_j)\ge\liminf_{j\to+\infty}\frac1{\eps_j^2}\int_{H_j}W(\mathbf I+\eps_j\nabla v_j)\\&\qquad\ge \liminf_{j\to+\infty}\left(\frac12\int_{H_j}\mathbb E(v_j)\,D^2W(\mathbf I)\,\mathbb E(v_j)-\int_{H_j}\omega(\eps_j|\nabla v_j|)\,|\nabla v_j|^2\right)\\&\qquad \ge \liminf_{j\to+\infty}\left(\frac12\int_{H_j}\mathbb E(v_j)\,D^2W(\mathbf I)\,\mathbb E(v_j)-\int_{H_j}\omega(\sqrt{\eps_j})|\nabla v_j|^2\right).
\end{aligned}\]
But $\omega(\sqrt{\eps_j})\to 0$ as $j\to+\infty$ and $(\nabla v_j)_{j\in\mathbb N}$ is bounded in $L^2(\Omega;\R^{d\times d})$, therefore
\[
\liminf_{j\to+\infty}\frac1{\eps_j^2}\int_\Omega W(\mathbf I+\eps_j\nabla v_j)\ge \liminf_{j\to+\infty}\frac12\int_{H_j}\mathbb E(v_j)\,D^2W(\mathbf I)\,\mathbb E(v_j)\ge \frac12\int_{\Omega}\mathbb E(v)\,D^2W(\mathbf I)\,\mathbb E(v),
\]
\AAA
where the last inequality is due to the \AAA weak convergence of $\nabla v_j$ to $\nabla v$ in $L^2(\Omega;\R^{d\times d})$ and to the fact that $|\Omega\setminus H_j|\to 0,$ yielding the weak $L^2(\Omega;\R^{d\times d})$ convergence of $\bb_{H_j}\mathbb E(v_j)$ to $\mathbb E(v)$, and to  the weak $L^2(\Omega;\mathbb R^{d\times d})$ lower semicontinuity of the map $\mathbf G\mapsto\int_\Omega\mathbf G^T\,D^2W(\mathbf I)\,\mathbf G$. \BBB

With respect to the weak $W^{2,p}$ convergence, the interfacial term is \AAA lower semicontinuous \BBB by Lemma \ref{keylemma}, the load term is continuous, and the second gradient term is lower semicontinuous \MMM thanks to the assumptions \eqref{H}. \BBB The proof is concluded. 
\end{proof}

\begin{proofad}
\MK
Since $W\ge 0$, by Young inequality along with \eqref{linearload}, \eqref{H} and \eqref{Korn}, calculations similar to \eqref{Lowerbound} 
 show that
 $\mathcal G_{\eps_j}$ is bounded from below on $W^{2,p}_\Gamma(\Omega;\R^d)$, uniformly with respect to $j\in\mathbb N.$
Moreover, by applying the same estimate to $v_j$, since $\mathcal G_{\eps_j}(0)=0$, we obtain
\[
C_0\int_\Omega |\nabla^2v_j|^p\le \mathcal G_{\eps_j}(0)+1=1
\EOR
\] 
for every large enough $j$, thus showing that the sequence $(v_j)_{j\in\mathbb N}$ is uniformly bounded in $W^{2,p}(\Omega;\R^d)$.
Having shown the $\Gamma$-convergence by means of Lemma \ref{limsup} and Lemma \ref{liminf}, the proof concludes. Indeed, let $v\in W^{2,p}_\Gamma(\Omega;\R^d)$ be a weak $W^{2,p}(\Omega;\R^d)$ limit point of the sequence $(v_j)_{j\in\mathbb N}$. Let $\hat v\in W^{2,p}_\Gamma(\Omega;\R^d)$ and let $(\hat v_j)_{j\in\mathbb N}\subset W^{2,p}_\Gamma(\Omega)$ be a sequence such that $\mathcal G_{\eps_j}(\hat v_j)\to\mathcal G_*(\hat v)$ as $j\to+\infty$, whose existence is ensured by Lemma \ref{limsup}. By minimality of $v_j$  and Lemma \ref{liminf} we get
\[
\mathcal G_*(v)\le\liminf_{j\to+\infty}\mathcal G_{\eps_j}(v_j)\le \limsup_{j\to+\infty}\mathcal G_{\eps_j}(\hat v_j)=\mathcal G_{*}(\hat v).
\]
The arbitrariness of $\hat v\in W^{2,p}_\Gamma(\Omega;\R^d)$ ends the proof.
\end{proofad}


 \section{The  traction problem: proof of Theorem \ref{main2}}\label{Sec:Proof2}
 
We start by proving existence of minimizers for functional $\overline{\mathcal G}$ over $W^{2,p}(\Omega;\R^d)$. This is done after a preliminary lemma

\begin{lemma}\label{aux} Let $p>1$. Let $(u_j)_{j\in\mathbb N}\subset W^{2,p}(\Omega;\R^d)$ be a sequence such that $\mathbb E(u_j)\in L^2(\Omega;\R^{d\times d})$ for every $j\in\mathbb N$. Suppose that   
 \begin{equation}\label{doublebound}\sup_{j\in\mathbb N}\|\mathbb E(u_j)\|_{L^2(\Omega;\R^{d\times d})}+\sup_{j\in\mathbb N}\|\nabla^2u_j\|_{L^p(\Omega;\R^{d\times d\times d})}<+\infty.\end{equation}  Then there exist $u\in W^{2,p}(\Omega;\R^d)$ \MMM with $\mathbb E(u)\in L^2(\Omega;\R^{d\times d})$\BBB, a sequence $(a_j)_{j\in\mathbb N}\subset\R^d$ and a sequence of skew-symmetric matrices $(\mathbf W_j)_{j\in\mathbb N}\subset\mathbb R^{d\times d}$ such that as $j\to+\infty$, along a suitable not relabeled subsequence,
\begin{equation}\label{nuova}\begin{aligned}
&u_j-\mathbf W_jx-a_j\rightharpoonup u\;\;\mbox{weakly  in $W^{2,p}(\Omega;\R^d)$}\\ &\mathbb E(u_j)\rightharpoonup\mathbb E(u)\;\;\mbox{weakly in $L^2(\Omega;\R^{d\times d})$}.\end{aligned}
\end{equation}
\end{lemma}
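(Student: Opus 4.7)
The plan is to define $\mathbf W_j$ as (the skew-symmetric part of) the mean of $\nabla u_j$ and $a_j$ as the mean of $u_j-\mathbf W_j x$, so that the corrected sequence $u_j-\mathbf W_j x-a_j$ becomes uniformly bounded in $W^{2,p}(\Omega;\R^d)$. The key observation is that although only $\|\nabla^2 u_j\|_{L^p}$ is controlled a priori via the natural gradient bound, the mean value $\overline{\mathbb E(u_j)}:=\tfrac{1}{|\Omega|}\int_\Omega \mathbb E(u_j)\,dx$ is also uniformly bounded, since the $L^2$ bound on $\mathbb E(u_j)$ from \eqref{doublebound}, on the bounded domain $\Omega$, implies an $L^1$ bound by H\"older's inequality.

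More precisely, I would first apply Poincar\'e--Wirtinger to $\nabla u_j\in W^{1,p}(\Omega;\R^{d\times d})$ to obtain $\|\nabla u_j-\overline{\nabla u_j}\|_{L^p}\le C\|\nabla^2 u_j\|_{L^p}$, uniformly bounded. Writing $\overline{\nabla u_j}=\overline{\mathbb E(u_j)}+\mathbf W_j$ with $\mathbf W_j\in\R^{d\times d}$ skew-symmetric, and using the uniform bound on $\overline{\mathbb E(u_j)}$, one deduces that $\nabla u_j-\mathbf W_j$ is uniformly bounded in $L^p(\Omega;\R^{d\times d})$, hence in $W^{1,p}(\Omega;\R^{d\times d})$ since $\nabla^2 u_j=\nabla^2(u_j-\mathbf W_j x)$ is bounded in $L^p$ by assumption. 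Setting $a_j:=\tfrac{1}{|\Omega|}\int_\Omega(u_j-\mathbf W_j x)\,dx$, a second Poincar\'e--Wirtinger estimate yields a uniform $W^{2,p}(\Omega;\R^d)$ bound for $u_j-\mathbf W_j x-a_j$. Reflexivity of $W^{2,p}$ supplies a (not relabelled) subsequence such that $u_j-\mathbf W_j x-a_j\rightharpoonup u$ weakly in $W^{2,p}(\Omega;\R^d)$ for some $u\in W^{2,p}(\Omega;\R^d)$.

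For the second assertion, weak convergence in $W^{2,p}$ gives $\nabla u_j-\mathbf W_j\rightharpoonup\nabla u$ weakly in $L^p(\Omega;\R^{d\times d})$, and the skew-symmetry of $\mathbf W_j$ yields $\mathbb E(u_j)=\mathbb E(u_j-\mathbf W_j x-a_j)\rightharpoonup\mathbb E(u)$ weakly in $L^p$. To upgrade to weak $L^2$ convergence, I would exploit the uniform bound $\sup_j\|\mathbb E(u_j)\|_{L^2}<+\infty$ from \eqref{doublebound} to extract, along a further subsequence, a weak $L^2$ limit $F\in L^2(\Omega;\R^{d\times d})$. Both weak limits coincide as distributions on $\Omega$, so by uniqueness $F=\mathbb E(u)$ a.e., which in particular gives $\mathbb E(u)\in L^2(\Omega;\R^{d\times d})$ together with the claimed weak $L^2$ convergence. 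The only delicate point is this last identification step, since $L^2$ and $L^p$ are not comparable in general on $\Omega$ when neither $p\le 2$ nor $p\ge 2$ is assumed; it has to be argued via distributional (rather than functional-analytic) uniqueness of weak limits, but no genuine obstacle arises.
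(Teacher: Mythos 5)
Your proof is correct and takes a genuinely different route from the paper.

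The paper's proof relies on the second Korn inequality to pass from the $\mathbb E(u_j)$ bound to a bound on $\nabla u_j-\mathbf W_j$, and this forces a case split: if $1<p\le 2$ the $L^2$ bound on $\mathbb E(u_j)$ trivially yields an $L^p$ bound and Korn applies directly, whereas if $p\ge 2$ the paper must first apply a Poincar\'e--Wirtinger estimate to $\mathbb E(u_j)$ itself (extracting a constant matrix $\mathbf U_j$ and showing it is bounded) in order to obtain an $L^p$ bound on $\mathbb E(u_j)$ before invoking Korn. Your decomposition instead lets the second-gradient bound do all the work controlling the oscillation of $\nabla u_j$ (Poincar\'e--Wirtinger applied to $\nabla u_j$), and uses the $L^2$ bound on $\mathbb E(u_j)$ only through its mean value, which is trivially controlled via H\"older. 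The payoff is that you avoid Korn's inequality altogether and obtain a proof that is uniform in $p>1$, with no case split. Both approaches are standard, but yours is arguably more economical here given that \eqref{doublebound} already provides full control of $\nabla^2 u_j$. One cosmetic remark: the cautionary note at the end about $L^2$ and $L^p$ being ``not comparable'' on $\Omega$ is unnecessary, since on a bounded domain one of the two inclusions always holds (whichever of $p\le 2$ or $p\ge 2$ applies); your distributional identification of the two weak limits is nonetheless a clean way to avoid distinguishing cases, and it is correct.
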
 
 \begin{proof} Assume first that $1<p\le2$. By Poincar\'e inequality and by second Korn inequality, for every $j\in\mathbb N$ there exist $a_j\in\mathbb R^d$ and a skew-symmetric matrix $\mathbf W_j\in\mathbb R^{d\times d}$ such that
 \begin{equation}\label{eg2}
 \|u_j-\mathbf W_jx-a_j\|_{L^{p}(\Omega;\R^d)}\le \mathfrak q\,\|\nabla u_j-\mathbf W_j \|_{L^p(\Omega;\R^{d\times d})}\le \mathfrak q\mathfrak K \|\mathbb E(u_j)\|_{L^p(\Omega;\R^{d\times d})},
 \end{equation}
 where $\mathfrak q$, $\mathfrak K$ are positive constants, only depending on $\Omega$ and $p$.  We conclude from \eqref{doublebound} that there exists $u\in W^{1,p}(\Omega;\R^d)$ such that, by passing to a not relabeled subsequence,  $u_j-\mathbf W_jx-a_j\rightharpoonup u$ weakly in $W^{1,p}(\Omega;\R^{d})$ as $j\to+\infty$. \MMM But then \eqref{doublebound} implies that $\mathbb E(u)\in L^2(\Omega;\R^{d\times d})$ and that $\mathbb E(u_j)\to\mathbb E(u)$ weakly in $L^2(\Omega;\R^{d\times d})$. The boundedness in $L^p(\Omega;\R^{d\times d\times d})$ of the sequence $(\nabla^2u_j)_{j\in\mathbb N}$, still given by \eqref{doublebound}, allows to conclude.
 
 Else if $p\ge2$,  Poincar\'e inequality yields for every $j\in\mathbb N$ the existence  of $\mathbf{U}_j\in\mathbb R^{d\times d}$  such that
 \begin{equation}\label{henri}\begin{aligned}
 \|\mathbb E(u_j)-\mathbf{U}_j\|_{L^2(\Omega;\R^{d\times d})}&\le |\Omega|^{\tfrac{p-2}{2p}}\|\mathbb E(u_j)-\mathbf{U}_j\|_{L^p(\Omega;\R^{d\times d})}\\&\le \mathfrak q|\Omega|^{\tfrac{p-2}{2p}}\|\nabla\mathbb E(u_j)\|_{L^p(\Omega;\R^{d\times d})}\le \mathfrak q|\Omega|^{\tfrac{p-2}{2p}}\|\nabla^2u_j\|_{L^p(\Omega;\R^{d\times d})}.\end{aligned}
 \end{equation}
 But \eqref{doublebound} and \eqref{henri} imply that the sequence $(\mathbf{U}_j)_{j\in\mathbb N}$ is uniformly bounded in $\mathbb R^{d\times d}$. Therefore still by \eqref{henri} we get that $(\mathbb E(u_j))_{j\in\mathbb N}$ is uniformly bounded also in $L^p(\Omega;\R^{d\times d})$,
 so that we still have, by Korn and Poincar\'e inequalities, the validity of \eqref{eg2}, where the right hand side is uniformly bounded with respect to $j$. We thus conclude as done in the case $1<p\le 2$.
  \end{proof}
 
\begin{lemma}\label{barg}
 Let \AAA $p\ge 2d/(d+2)$ and $p\ge dq/(d-1+q)$. If $d=2$ and $q=1$, let $p>1$.  \MMM
 Suppose that $W$ satisfies \eqref{frame} and \eqref{coerc} and that $H$ satisfies \eqref{H}. Let $\mathcal L$ be a bounded linear functional over $W^{2,p\wedge 2}(\Omega;\R^d)$ that satisfies \eqref{equi}. Then, there exists a solution \AAA to each of the problems $$\min\{\overline{\mathcal G}(u):u\in W^{2,p}(\Omega;\mathbb R^d)\}\qquad\mbox{and}\qquad\min\{{\mathcal G}(u):u\in W^{2,p}(\Omega;\mathbb R^d)\}.$$\end{lemma}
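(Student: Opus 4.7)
My plan is to apply the direct method of the calculus of variations. The cornerstone of the argument is the following invariance property that I would establish first: $\overline{\mathcal G}(u+\mathbf W x+a)=\overline{\mathcal G}(u)$ for every $u\in W^{2,p}(\Omega;\R^d)$, every skew-symmetric $\mathbf W\in\R^{d\times d}$ and every $a\in\R^d$. The elastic, second-gradient and interfacial terms are manifestly invariant (as already noted for $\mathcal G_*$ below \eqref{G*}). For the load term, upon writing
\[
\mathcal L(\mathbf R(u+\mathbf W x+a))=\mathcal L(\mathbf R u)+\mathcal L(\mathbf R\mathbf W x)+\mathcal L(\mathbf R a),
\]
the last summand vanishes by \eqref{equi}, and the crucial identity is $\mathcal L(\mathbf R\mathbf W x)=0$ for every $\mathbf R\in\mathcal S^0_{\mathcal L}$ and every skew $\mathbf W$. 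I would derive this by differentiating along the smooth curve $t\mapsto \mathbf R e^{t\mathbf W}\in SO(d)$: by \eqref{compatibility}, the real-valued function $\phi(t):=\mathcal L(\mathbf R e^{t\mathbf W}x-x)$ is nonpositive on $\R$ and satisfies $\phi(0)=0$ since $\mathbf R\in\mathcal S^0_{\mathcal L}$, so $t=0$ is a local maximum and $\phi'(0)=\mathcal L(\mathbf R\mathbf W x)=0$. Passing to the maximum over $\mathbf R\in\mathcal S^0_{\mathcal L}$ then yields the invariance of the load term, hence of $\overline{\mathcal G}$.

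I would then take a minimizing sequence $(u_n)_{n\in\mathbb N}\subset W^{2,p}(\Omega;\R^d)$ with $\overline{\mathcal G}(u_n)\le\overline{\mathcal G}(0)+1=1$. For each $n$, Korn-Poincar\'e provides a skew $\mathbf W_n^*$ and $a_n^*\in\R^d$ such that $\tilde u_n:=u_n-\mathbf W_n^*x-a_n^*$ satisfies
\[
\|\tilde u_n\|_{W^{2,p\wedge 2}(\Omega;\R^d)}\le C\bigl(\|\mathbb E(u_n)\|_{L^{p\wedge 2}}+\|\nabla^2 u_n\|_{L^{p\wedge 2}}\bigr)\le C'\bigl(\|\mathbb E(u_n)\|_{L^2}+\|\nabla^2 u_n\|_{L^p}\bigr)
\]
by H\"older on the bounded set $\Omega$. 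Since $\mathbf R\in SO(d)$ acts isometrically on $W^{2,p\wedge 2}(\Omega;\R^d)$, the invariance above and \eqref{linearload} give $\max_{\mathbf R\in\mathcal S^0_{\mathcal L}}\mathcal L(\mathbf R u_n)=\max_{\mathbf R\in\mathcal S^0_{\mathcal L}}\mathcal L(\mathbf R\tilde u_n)\le C''\bigl(\|\mathbb E(u_n)\|_{L^2}+\|\nabla^2 u_n\|_{L^p}\bigr)$. Combined with the lower bound coming from \eqref{coerc} and \eqref{H}, the inequality $\overline{\mathcal G}(u_n)\le 1$ together with Young's inequality (recall $p>1$) delivers a uniform bound on $\|\mathbb E(u_n)\|_{L^2}+\|\nabla^2 u_n\|_{L^p}$.

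At this stage Lemma \ref{aux} supplies skew matrices $\mathbf W_n$ and vectors $a_n\in\R^d$ such that, along a subsequence, $u_n-\mathbf W_n x-a_n\rightharpoonup u$ weakly in $W^{2,p}(\Omega;\R^d)$ and $\mathbb E(u_n)\rightharpoonup\mathbb E(u)$ weakly in $L^2(\Omega;\R^{d\times d})$, for some $u\in W^{2,p}(\Omega;\R^d)$ with $\mathbb E(u)\in L^2$. By invariance, $\overline{\mathcal G}(u_n-\mathbf W_n x-a_n)=\overline{\mathcal G}(u_n)\to\inf\overline{\mathcal G}$, and I would conclude by weak lower semicontinuity: the elastic quadratic form is weakly lower semicontinuous on $L^2$ by \eqref{coerc}; the second-gradient term is weakly lower semicontinuous on $W^{2,p}$ by convexity of $H$; the interfacial term is weakly continuous thanks to the compact trace $W^{1,p}(\Omega;\R^d)\hookrightarrow L^1(\partial\Omega;\R^d)$ (as in Lemma \ref{exist}); and $u\mapsto\max_{\mathbf R\in\mathcal S^0_{\mathcal L}}\mathcal L(\mathbf R u)$ is even weakly continuous, being the maximum over the compact set $\mathcal S^0_{\mathcal L}\subset SO(d)$ of the jointly continuous map $(\mathbf R,u)\mapsto\mathcal L(\mathbf R u)$. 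The main obstacle is the invariance identity $\mathcal L(\mathbf R\mathbf W x)=0$, which encodes the subtle interplay between \eqref{equi} and \eqref{compatibility} via a first-order variation argument, and is precisely what makes the maximum in the definition of $\overline{\mathcal G}$ compatible with the compactness provided by Lemma \ref{aux}.
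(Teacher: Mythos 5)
Your proof is correct, and it is organized differently from the paper's. The paper introduces the auxiliary functional
$\mathcal H(v,\mathbf R)=\tfrac12\int\mathbb E(v)\,D^2W(\mathbf I)\,\mathbb E(v)+\int H(\nabla^2v)+\gamma\int_{\partial\Omega}|\mathbb A(v)\,\n\cdot\n|-\mathcal L(\mathbf R v)$
and minimizes it \emph{jointly} over $(v,\mathbf R)\in W^{2,p}(\Omega;\R^d)\times\mathcal S^0_{\mathcal L}$, using that $\inf_v\overline{\mathcal G}=\inf_{(v,\mathbf R)}\mathcal H$; this sidesteps the need to discuss continuity of $u\mapsto\max_{\mathbf R}\mathcal L(\mathbf R u)$, since one works with a fixed rotation $\mathbf R_j$ along the sequence. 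You instead work directly with $\overline{\mathcal G}$, normalize the minimizing sequence by Korn--Poincar\'e, and appeal to weak continuity of the max over the compact set $\mathcal S^0_{\mathcal L}$ (which you justify correctly). Both routes feed into Lemma~\ref{aux} and the same lower-semicontinuity arguments, so the core is the same; the bookkeeping around the load term differs.

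The genuinely valuable part of your write-up is the explicit derivation of the identity $\mathcal L(\mathbf R\mathbf W x)=0$ for $\mathbf R\in\mathcal S^0_{\mathcal L}$ and $\mathbf W$ skew, via the first-variation argument on $t\mapsto\mathcal L(\mathbf R e^{t\mathbf W}x-x)$. The paper's proof tacitly uses this fact: both in bounding $\mathcal L(\mathbf R_j u_j)$ by $\|\mathbb E(u_j)\|_{L^2}+\|\nabla^2 u_j\|_{L^p}$ (one must subtract a rigid displacement $\mathbf W_jx+a_j$ and needs $\mathcal L(\mathbf R_j(\mathbf W_jx+a_j))=0$, since $\mathrm{sym}(\mathbf R\mathbf W)\neq0$ in general) and in the step $\mathcal L(\mathbf R_j u_j)\to\mathcal L(\mathbf R u)$ (again because $u_j$ converges only after subtracting $\mathbf W_jx+a_j$). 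You fill that gap cleanly. One caveat you should flag: your derivation of $\mathcal L(\mathbf R\mathbf W x)=0$ uses \eqref{compatibility}, which is \emph{not} listed among the hypotheses of Lemma~\ref{barg}, only \eqref{equi} is. In fact the conclusion appears to require it: testing $\overline{\mathcal G}$ on $v=t\mathbf W x$ shows $\overline{\mathcal G}(t\mathbf W x)=-t\max_{\mathbf R\in\mathcal S^0_{\mathcal L}}\mathcal L(\mathbf R\mathbf W x)$, so if $\mathcal L(\mathbf R\mathbf W x)\neq0$ for some admissible $(\mathbf R,\mathbf W)$ then $\overline{\mathcal G}$ is unbounded below and no minimizer exists. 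So you are not smuggling in an unnecessary hypothesis; rather, you have identified one that is needed and that the paper omits from the lemma statement (it is present in Theorem~\ref{main2}, where the lemma is applied). Apart from this, your argument is complete and sound.
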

\begin{proof}
\AAA We consider the problem for functional $\overline{\mathcal G}$, the proof for the other one being analogous. \BBB
As seen in the proof of Lemma \ref{exist} we have
 the Sobolev embedding $W^{1,p}(\Omega;\R^d)\hookrightarrow L^2(\Omega;\R^d)$ and the Sobolev trace embedding  \AAA $W^{1,p}(\Omega;\R^d)\hookrightarrow L^q(\partial\Omega;\R^d)$, \BBB so that $\overline{\mathcal G}(v)$ is well-defined and finite for every $v\in W^{2,p}(\Omega;\R^d)$.
For every $v\in W^{2,p}(\Omega;\R^d)$ and every $\mathbf R\in \mathcal S^0_\mathcal L$ we define
\begin{equation*}\mathcal H(v,\mathbf R):= \frac12\int_\Omega\mathbb E(v)\,D^2W(\mathbf I)\,\mathbb E(v)+\int_\Omega H(\nabla^2 v)\AAA+\gamma\int_{\partial\Omega}|\mathbb A(v)\,\n\cdot\n|^q\,dS\BBB -\mathcal L(\mathbf R v).\end{equation*}
Let $(u_j,\mathbf R_j)_{j\in\mathbb N}\subset W^{2,p}(\Omega;\R^d)\times\mathcal S^0_\mathcal L$ be a minimizing sequence for 
the minimization problem $$\min\{\mathcal H(u,\mathbf R): (u,\mathbf R)\in W^{2,p}(\Omega;\R^d)\times\mathcal S^0_\mathcal L\}.$$
Since ${\mathcal H}(0,\mathbf I)=0$ we may assume wlog that ${\mathcal H}(u_j,\mathbf R_j)\le 1$ for every $j\in\mathbb N$.
\MMM By \eqref{coerc} and \eqref{H}, by H\"older inequality and by and \eqref{kornpoincare} with $r=2\wedge p$, we get
\[\begin{aligned}
C_0\int_{\Omega}|\nabla^2 u_j|^p+C\int_\Omega |\mathbb E(u_j)|^2&\le 1+\mathcal L(\mathbf R_j u_j)\\&\le1+ Q \left(\|\mathbb E(u_j)\|_{L^2(\Omega;\R^{d\times d})}+\|\nabla^2u_j\|_{L^p(\Omega;\R^{d\times d\times d})}\right),
\end{aligned}\]\BBB
where $Q$ is a suitable positive constant, only depending on $p$,  $\Omega$ and $\mathcal L$. An application of Young inequality in the right hand side (similarly to the proof of Lemma \ref{exist}), shows that the sequence $(u_j)_{j\in\mathbb N}$ satisfies \eqref{doublebound}. The same computation shows that the sequence $(\overline{\mathcal G}(u_j))_{j\in\mathbb N}$ is bounded from below. By Lemma \ref{aux},  there exist $u\in W^{2,p}(\Omega;\R^d)$, a sequence $(a_j)_{j\in\mathbb N}\subset\R^d$ and a sequence of skew-symmetric matrices $(\mathbf W_j)_{j\in\mathbb N}\subset\mathbb R^{d\times d}$ such that \eqref{nuova} holds along a suitable not relabeled subsequence. Since $\mathcal L$ is a bounded linear functional over $W^{2,p}(\Omega;\R^d)$, and since \eqref{equi} holds, we deduce
\[
\lim_{j\to+\infty}\mathcal L(u_j)=\lim_{j\to+\infty}\mathcal L(u_j-\mathbf W_jx-a_j)=\mathcal L(u).
\]
 The Sobolev trace embedding \AAA $W^{1,p}(\Omega)\hookrightarrow L^q(\partial\Omega)$ shows that $\mathbb A(u_j)\,\n\cdot\n\rightharpoonup\mathbb A(u)\,\n\cdot\n$ weakly in $L^q(\partial\Omega)$, therefore  \[
\liminf_{j\to+\infty}\int_{\partial\Omega}|\mathbb A(u_j)\,\n\cdot\n|^q\,dS=\liminf_{j\to+\infty}\int_{\partial\Omega}|\mathbb A(u_j-\mathbf W_jx-a_j)\,\n\cdot\n|^q\,dS\ge\int_{\partial\Omega}|\mathbb A(u)\,\n\cdot\n|^q\,dS.\] \BBB
Along the sequence $(u_j)_{j\in\mathbb N}$, the first two terms of $\mathcal H$ are lower semicontinuous thanks to \eqref{nuova}.
By possibly extracting a further not relabeled subsequence we have $\mathbf R_j\to\mathbf R\in \mathcal S^0_\mathcal L$ as $j\to+\infty$ and then $\mathcal L(\mathbf R_ju_j)\to\mathcal L(\mathbf R u)$.
 We conclude that
\[
\overline{\mathcal G}(u)\le \mathcal H(u,\mathbf R)\le\liminf_{j\to+\infty}\mathcal H(u_j,\mathbf R_j)=\inf_{W^{2,p}(\Omega;\R^d)\times\mathcal S^0_\mathcal L}\mathcal H=\inf_{W^{2,p}(\Omega;\R^d)}\overline{\mathcal G},
\]
thus showing that $u$ is a minimizer for $\overline{\mathcal G}$ over $W^{2,p}(\Omega;\R^d)$.
\end{proof} 

\MMM
Next we prove existence of minimizers over $W^{2,p}(\Omega;\R^d)$ for functional $\mathcal G_\eps$, for fixed $\eps$.
\begin{lemma}\label{nonlineare}
\AAA Let $p\ge dq/(q+1)$. If $d=2$ and $q=1$, assume in addition that $p>1$\BBB. Let $\mathcal L$ be  a bounded linear  functional on $W^{2,p\wedge 2}(\Omega;\R^d)$ that satisfies \eqref{equi}. Suppose that   $W$ satisfies \eqref{frame} and \eqref{dist}, and that $H$ satisfies \eqref{H}.
Then the functional $\mathcal{G}_\varepsilon$ attains a minimum on $W^{2,p}(\O;\R^d)$, for every $\varepsilon>0$.
\end{lemma}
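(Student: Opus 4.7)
The strategy parallels the Dirichlet case of Lemma \ref{newexist}, but since there is no boundary condition on $\partial\Omega$ there is no direct coercivity of $\nabla v$. I will extract a rotation from each element of the minimizing sequence via the Friesecke-James-M\"uller rigidity inequality \eqref{rigidity}, in the spirit of Theorem \ref{main2}. Without loss of generality take $\varepsilon=1$, and let $(v_n)_{n\in\mathbb N}\subset W^{2,p}(\Omega;\mathbb R^d)$ be a minimizing sequence for $\mathcal G_1$.

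First I would apply \eqref{rigidity} (to the maps $x\mapsto v_n(x)+x$) and combine it with \eqref{dist} to select $\mathbf R_n\in SO(d)$ satisfying $\|\mathbf I+\nabla v_n-\mathbf R_n\|_{L^2(\Omega;\mathbb R^{d\times d})}^2\le \tfrac{C_\Omega}{\bar C}\int_\Omega W(\mathbf I+\nabla v_n)\,dx$. Setting $u_n:=\mathbf R_n^T v_n+(\mathbf R_n^T-\mathbf I)x$, so that $\mathbf I+\nabla u_n=\mathbf R_n^T(\mathbf I+\nabla v_n)$, the frame indifference of $W$, the $SO(d)$-invariance of $H$, and the identity $\cof\mathbf R_n=\mathbf R_n$ (together with $|\mathbf R_n y|=|y|$) imply that the elastic, second-gradient and surface terms of $\mathcal G_1$ coincide on $v_n$ and $u_n$. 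The load splits as $\mathcal L(v_n)=\mathcal L(\mathbf R_n u_n)+\mathcal L(\mathbf R_n x-x)$, the second summand being nonpositive by \eqref{compatibility}; moreover, by \eqref{equi} I may further replace $u_n$ by $u_n$ minus its mean without altering $\mathcal G_1(v_n)$.

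Next, using \eqref{kornpoincare} with $r=2\wedge p$, the pointwise estimates $|\mathbb E(\mathbf R_n u_n)|\le|\nabla u_n|$ and $|\nabla^2(\mathbf R_n u_n)|=|\nabla^2 u_n|$, and H\"older's inequality, the term $|\mathcal L(\mathbf R_n u_n)|$ is controlled by $C'(\|\nabla u_n\|_{L^2}+\|\nabla^2 u_n\|_{L^p})$. Combining with the rigidity bound $\|\nabla u_n\|_{L^2}^2\le (C_\Omega/\bar C)\int_\Omega W(\mathbf I+\nabla u_n)\,dx$, the coercivity $H(\cdot)\ge C_0|\cdot|^p$ from \eqref{H}, and Young's inequality yields
\[
\mathcal G_1(v_n)\ge\tfrac12\int_\Omega W(\mathbf I+\nabla u_n)\,dx+\tfrac12\int_\Omega H(\nabla^2 u_n)\,dx-C,
\]
which both proves $\inf_{W^{2,p}(\Omega;\mathbb R^d)}\mathcal G_1>-\infty$ and, along the minimizing sequence, supplies uniform bounds on $\|\nabla u_n\|_{L^2}$ and $\|\nabla^2 u_n\|_{L^p}$. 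Poincar\'e-Wirtinger (applied first to $\nabla u_n$ using the $L^p$ bound on $\nabla^2 u_n$, then to $u_n$ using its vanishing mean) then upgrades this to a uniform $W^{2,p}(\Omega;\mathbb R^d)$ bound on $(u_n)_{n\in\mathbb N}$.

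Finally, a (not relabeled) subsequence satisfies $u_n\rightharpoonup u_*$ in $W^{2,p}$ and $\mathbf R_n\to\mathbf R_\infty\in SO(d)$, and I would set $v_*:=\mathbf R_\infty u_*+(\mathbf R_\infty-\mathbf I)x\in W^{2,p}(\Omega;\mathbb R^d)$. Fatou's lemma applied via a.e.\ convergence $\nabla u_n\to\nabla u_*$ (by Rellich) controls the elastic term; convexity of $H$ controls the second-gradient term; compactness of the trace $W^{1,p}(\Omega)\hookrightarrow L^{d-1}(\partial\Omega)$ for $p>d/2$ (exactly as in Lemma \ref{newexist}) gives strong convergence of $\cof(\mathbf I+\nabla u_n)$ in $L^1(\partial\Omega;\mathbb R^{d\times d})$ and hence continuity of the surface term; and continuity of $\mathcal L$ on $W^{2,2\wedge p}$ combined with $\mathbf R_n\to\mathbf R_\infty$ handles the load. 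Frame indifference then transfers these conclusions to $v_*$, yielding $\mathcal G_1(v_*)\le\liminf\mathcal G_1(v_n)=\inf_{W^{2,p}}\mathcal G_1$, so $v_*$ is a minimizer. The main obstacle is precisely the simultaneous use of rigidity \eqref{rigidity}, compatibility \eqref{compatibility}, and frame indifference of \emph{every} bulk and surface term to replace the minimizing sequence by a $W^{2,p}$-bounded one; once that reduction is secured, the lower semicontinuity analysis follows the Dirichlet template of Lemma \ref{newexist}.
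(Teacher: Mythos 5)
Your proposal is correct, but it takes a genuinely different route from the paper's proof. The paper does not rotate the minimizing sequence. Instead, it observes that the rigidity rotation $\mathbf R_n$ supplied by \eqref{rigidity} lies in the compact set $SO(d)$, so $|\mathbf I-\mathbf R_n|$ is bounded by a dimensional constant; this lets them absorb the rotation directly into an additive constant, giving $c+\mathcal G_1(v)\ge c'\int_\Omega|\nabla v|^2+C_0\int_\Omega|\nabla^2 v|^p-\mathcal L(v)$ without changing variables at all. Combined with \eqref{kornpoincare} (at $r=2\wedge p$, bounding $\mathcal L$ by $\|\mathbb E(v)\|_{L^{2\wedge p}}$ and $\|\nabla^2 v\|_{L^{2\wedge p}}$), Young's inequality gives boundedness from below and $L^2$ and $L^p$ bounds on $\nabla v_n$ and $\nabla^2 v_n$. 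The only normalization used is subtracting a constant $a_n$ (licit by \eqref{equi}), after which weak $W^{2,p}$ convergence follows by Poincar\'e, and the lower-semicontinuity arguments match yours. Notably, the paper's proof never invokes \eqref{compatibility} for this fixed-$\varepsilon$ statement; that hypothesis is only needed as $\varepsilon\to 0$. Your rotation-of-the-minimizing-sequence strategy is the one the paper reserves for the compactness Lemma \ref{compactness2}; applied here, it buys nothing the simpler direct estimate doesn't, and it creates extra bookkeeping when transferring the weak limit $u_*$ back to a candidate minimizer $v_*$ (you must verify that the discarded nonnegative term $-\mathcal L(\mathbf R_n x-x)$ converges, by continuity, to the corresponding term $-\mathcal L(\mathbf R_\infty x-x)$ appearing in $\mathcal G_1(v_*)$, rather than simply dropping it). Still, since $\mathcal L(\mathbf R_n x-x)\to\mathcal L(\mathbf R_\infty x-x)$ as $\mathbf R_n\to\mathbf R_\infty$, this works out, and the argument is sound; it is just a heavier version of the same idea.
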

\begin{proof} We fix wlog $\eps=1$. For every $v\in W^{2,p}(\Omega;\R^d)$, the conditions on $p$ imply that $v\in W^{1,2}(\Omega;\R^d)$ by Sobolev embedding, and thanks to \eqref{rigidity} and \eqref{dist} we get
\[
\mathcal G_1(v)\ge \int_\Omega W(\mathbf I+\nabla v)+\int_\Omega H(\nabla^2 v)-\mathcal L(v)\ge \frac{\bar C}{C_\Omega}\int_\Omega|\mathbf I+\nabla v-\mathbf R|^2 +\int_\Omega H(\nabla^2 v)-\mathcal L(v),
\]
for some suitable $\mathbf R\in SO(d)$, depending on $v$, therefore there exists $c>0$ (only depending on $\bar C$, $C_\Omega$ and $d$), such that, also using \eqref{H},
\[
c+\mathcal G_1(v)\ge \int_\Omega |\nabla v|^2+\int_\Omega H(\nabla^2 v)-\mathcal L(v)\ge \int_\Omega |\nabla v|^2+C_0\int_\Omega |\nabla^2 v|^p-\mathcal L(v).
\]
Similarly to the proof of Lemma \ref{barg},
the latter estimate can be combined with    \eqref{kornpoincare} with $r=2\wedge p$ and with Young inequality to obtain that $\mathcal G_1$ is bounded from below over $W^{2,p}(\Omega;\R^d)$ and that every minimizing sequence $(v_n)_{n\in\mathbb N}\subset W^{2,p}(\Omega;\R^d)$ of functional $\mathcal G_1$  is such that $(\nabla v_n)_{n\in\mathbb N}$ is bounded in $L^2(\Omega;\R^{d\times d})$ and such that $(\nabla^2 v_n)_{n\in\mathbb N}$ is bounded in $L^p(\Omega;\R^{d\times d\times d})$. By arguing as in the proof of Lemma \ref{barg}, we may use Poincar\'e inequality and deduce that there exists $v\in W^{2,p}(\Omega;\R^d)$ with $\nabla v\in L^2(\Omega;\R^{d\times d})$ and  a sequence $(a_n)_{n\in\mathbb N}\subset \R^d$ such that, as $n\to+\infty$ along a suitable subsequence, $v_n-a_n\rightharpoonup v$ weakly in $W^{2, p}(\Omega;\R^d)$. 
In particular, up to extraction of a further subsequence, $\nabla v_n\to\nabla v$ a.e. in $\Omega$, thus by Fatou lemma and the continuity of $W$ we obtain the lower semicontinuity of the integral involving $W$ along the sequence $(v_n)_{n\in\mathbb N}$, while the lower semicontinuity of the term involving $H$ is ensured by \eqref{H}.  We have by \eqref{equi} $\mathcal L(v_n)=\mathcal L(v_n-a_n)\to \mathcal L(v)$ as $n\to+\infty$ since $\mathcal L$ is a bounded linear functional over $W^{2,p}(\Omega;\R^d)$. Finally, the interfacial term of functional $\mathcal G_1$ is \AAA lower semicontinuous \BBB along the sequence $(v_n)_{n\in\mathbb N}$, by the same argument in the proof of Lemma \ref{newexist}.  
We conclude that $v$ is a minimizer of $\mathcal G_1$ over $W^{2,p}(\Omega;\R^d)$.
\end{proof}
\BBB
 
 We need three lemmas in order to prove Theorem \ref{main2}, proving respectively compactness, lower bound and upper bound.

 \begin{lemma}\label{compactness2}  \AAA Let $p\ge dq/(q+1)$. If $d=2$ and $q=1$, assume in addition that $p>1$\BBB. Let $M>0$. Let $W$ satisfy \eqref{frame} and \eqref{dist}. Let $\mathcal L$ be a bounded linear functional over $W^{2,2\wedge p}(\Omega;\R^d)$ that satisfies \eqref{equi} and \eqref{compatibility}. Let $(\eps_j)_{j\in\mathbb N}\subset(0,1)$ be a vanishing sequence. Let $(v_j)_{j\in\mathbb N}\subset W^{2,p}(\Omega;\R^d)$ be a sequence such that $\mathcal G_{\eps_j}(v_j)\le M$ for every $j\in\mathbb N$. Then there exist $\mathbf R\in \mathcal S_{\mathcal L}^0$, a sequence $(\mathbf R_j)_{j\in\mathbb N}\subset SO(d)$,  and $u\in W^{2,p}(\Omega;\R^d)$ such that, letting 
  \begin{equation}\label{UJ}
 u_j:=\mathbf R_j^T v_j+\frac{\mathbf R_j^Tx-x}{\eps_j},
 \end{equation}
 in the limit as $j\to+\infty$ (possibly along a not relabeled subsequence) there hold 
 \[
\mathbf R_j\to\mathbf R\qquad\mbox{and}\qquad \nabla u_j\rightharpoonup\nabla  u \;\; \mbox{ weakly in $W^{1, p}(\Omega;\R^{d\times d})$}.
 \]
  \end{lemma}
 \begin{proof}
 A consequence of \eqref{rigidity} and of \eqref{dist} is that there exists a a sequence $(\mathbf R_j)_{j\in\mathbb N}\subset SO(d)$ and a constant $c>0$ (only depending on $W$ and $\Omega$) such that
\begin{equation}\label{userigidity}
c\int_\Omega|\nabla u_j|^2=\frac c{\eps_j^2}\int_\Omega|\mathbf I-\mathbf R_j+\eps_j\nabla v_j|^2\le \frac1{\eps_j^2}\int_\Omega W(\mathbf I+\eps_j\nabla v_j),
\end{equation}
and then we deduce, since  $\mathcal G_{\eps_j}(v_j)\le M$ and since \eqref{compatibility} holds, that
\begin{equation}\label{erre}\begin{aligned}
c\int_\Omega|\nabla u_j|^2+\int_\Omega H(\nabla^2 u_j)&\le M+\mathcal L(v_j)\\&= M+\frac1{\eps_j}\,\mathcal L(\mathbf R_jx-x)+\mathcal L(\mathbf R_j u_j)\le M+\mathcal L(\mathbf R_ju_j).\end{aligned}
\end{equation}
By including \eqref{kornpoincare} with $r=2\wedge p$ and H\"older inequality we obtain
\[
c\int_\Omega|\nabla u_j|^2+\int_\Omega H(\nabla^2 u_j)\le M+Q \|\nabla u_j\|_{L^2(\Omega;\R^{d\times d})}+Q\|\nabla^2 u_j\|_{L^p(\Omega;\R^{d\times d\times d})}.
\]
where $Q>0$ is a suitable constant, only depending on $\Omega,p$ and on $C_\mathcal L$ from \eqref{linearload}, and then by Young inequality we get  
\[
c\int_\Omega|\nabla u_j|^2+\int_\Omega H(\nabla^2 u_j)\le M+\frac {Q^2}{2\delta^2} +\frac{\delta^2}2\int_\Omega|\nabla u_j|^2+\frac{p-1}p\left(\frac{Q}{\delta}\right)^{\frac p{p-1}}+\frac{\delta^p}p\int_\Omega|\nabla^2 u_j|^p
\]
for every $\delta>0$. Choosing small enough $\delta$ we see that the sequence $(\nabla u_j)_{j\in\mathbb N}$ is bounded in $L^2(\Omega;\R^{d\times d})$ and that the sequence $(\nabla^2 u_j)_{j\in\mathbb N}$ is bounded in $L^p(\Omega;\R^{d\times d\times d})$.
By Poincar\'e inequality, there exists a sequence $(a_j)_{j\in\mathbb N}\subset \R^d$ and a positive constant $\mathfrak c$ (only depending on $\Omega$) such that $\|u_j-a_j\|_{L^2(\Omega;\R^d)}\le\mathfrak c \|\nabla u_j\|_{L^2(\Omega;\R^{d\times d})}$ for every $j\in\mathbb N$. Therefore, by passing to a suitable not relabeled subsequence, we have the existence of  $u\in W^{1,2}(\Omega;\R^d)$ such that $u_j-a_j\rightharpoonup u$ weakly in $L^2(\Omega;\R^d)$ and $\nabla u_j\to\nabla u$ weakly in $L^2(\Omega;\R^{d\times d})$
 as $j\to+\infty$;  the uniform bound on the sequence $(\nabla^2 u_j)_{j\in\mathbb N}$  in $L^p(\Omega;\R^{d\times d\times d})$
and Poincar\'e inequality again allow to conclude that $u\in W^{2,p}(\Omega;\R^d)$ and that $\nabla u_j\rightharpoonup \nabla u$ weakly in $W^{1,p}(\Omega;\R^{d\times d})$.

On the other hand, by \eqref{erre} and \eqref{compatibility} we have
\[
0\le -\frac1{\eps_j}\mathcal L(\mathbf R_jx-x)\le M+\mathcal L(\mathbf R_ju_j).
\] 
But then \eqref{linearload} and 
the already established uniform bounds of the sequence $(\nabla u_j)_{j\in\mathbb N}$ in $L^2(\Omega;\R^{d\times d})$ and of the sequence $(\nabla^2 u_j)_{j\in\mathbb N}$  in $L^p(\Omega;\R^{d\times d\times d})$ yield 
\[
\lim_{j\to+\infty}\mathcal L(\mathbf R_jx-x)=0.
\]
This shows that any limit point of the sequence $(\mathbf R_j)_{j\in\mathbb N}$ belongs to $\mathcal S ^0_{\mathcal L}$.
 \end{proof}

  \begin{lemma}\label{liminf2}  \AAA Let $p\ge dq/(q+1)$. \BBB Let $W$ satisfy \eqref{frame} and \eqref{coerc}.  Let $\mathcal L$ be a bounded linear functional over $W^{2,2\wedge p}(\Omega;\R^d)$ that satisfies \eqref{equi} and \eqref{compatibility}. Let $(\eps_j)_{j\in\mathbb N}\subset(0,1)$ be a vanishing sequence. Let $u\in W^{2,p}(\Omega;\R^d)$. Let $(v_j)_{j\in\mathbb N}\subset W^{2,p}(\Omega;\R^d)$ and $(\mathbf R_j)_{j\in\mathbb N}\subset SO(d)$ be sequences such that
  \[
 \nabla u_j\rightharpoonup\nabla  u \quad \mbox{weakly in $W^{1, p}(\Omega;\R^{d\times d})$}\quad\mbox{as $j\to+\infty$}, 
 \]
where  $
 u_j:=\mathbf R_j^T v_j+\frac{1}{\eps_j}(\mathbf R_j^Tx-x)
 $, and $\mathbf R_j\to\mathbf R\in \mathcal S^0_\mathcal L$ as $j\to+\infty$. Then
 \[
 \overline{\mathcal G}(u)\le\liminf_{j\to+\infty}\mathcal G_{\eps_j}(v_j). 
 \]
   \end{lemma}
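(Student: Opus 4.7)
The plan is to use frame indifference of $W$ and $H$ to rewrite the nonlinear functional entirely in terms of $u_j$, discard a sign-favorable term coming from \eqref{compatibility}, and then pass to the liminf term by term, closely mirroring Lemma \ref{liminf} and Lemma \ref{keylemma}. From the definition $u_j=\mathbf R_j^T v_j+\varepsilon_j^{-1}(\mathbf R_j^Tx-x)$ one computes $\mathbf I+\varepsilon_j\nabla v_j=\mathbf R_j(\mathbf I+\varepsilon_j\nabla u_j)$ and $\nabla^2 v_j=\mathbf R_j\nabla^2 u_j$. Using \eqref{frame}, the frame indifference in \eqref{H}, the identity $\cof(\mathbf R\mathbf A)=\mathbf R\cof\mathbf A$ for $\mathbf R\in SO(d)$ together with the isometry $|\mathbf R\xi|=|\xi|$, and the $p$-homogeneity of $H$ (so that $\varepsilon_j^{-p}\int_\Omega H(\varepsilon_j\nabla^2 v_j)=\int_\Omega H(\nabla^2 u_j)$), the $W$-, $H$-, and surface terms in $\mathcal G_{\varepsilon_j}(v_j)$ are invariant under replacing $v_j$ by $u_j$. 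For the load, $\mathcal L(v_j)=\mathcal L(\mathbf R_j u_j)+\varepsilon_j^{-1}\mathcal L(\mathbf R_jx-x)$ with the second summand nonpositive by \eqref{compatibility}, so
\[
\mathcal G_{\varepsilon_j}(v_j)\ge \frac{1}{\varepsilon_j^2}\int_\Omega W(\mathbf I+\varepsilon_j\nabla u_j)+\int_\Omega H(\nabla^2 u_j)-\mathcal L(\mathbf R_j u_j)+\frac{\gamma}{\varepsilon_j}\bigl\|\,|\cof(\mathbf I+\varepsilon_j\nabla u_j)\mathbf n|-1\,\bigr\|_{L^1(\partial\Omega)}.
\]

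The $W$-term is then handled exactly as in Lemma \ref{liminf}: on the sets where $\sqrt{\varepsilon_j}|\nabla u_j|<1$, whose complements in $\Omega$ vanish in measure, the uniform Taylor expansion of $W$ around $\mathbf I$ furnished by \eqref{coerc} and the modulus of continuity of $D^2W$ applies, and combined with the strong $L^2(\Omega;\mathbb R^{d\times d})$ convergence $\nabla u_j\to\nabla u$ (via the compact Sobolev embedding $W^{1,p}\hookrightarrow L^2$, valid for $p>d/2$) one recovers the quadratic elastic potential as liminf. The $H$-term is weakly lower semicontinuous on $W^{2,p}(\Omega;\mathbb R^d)$ by convexity of $H$ in \eqref{H}, and Lemma \ref{keylemma} directly gives the convergence of the surface term to $\gamma\int_{\partial\Omega}|\mathbb A(u)\mathbf n\cdot\mathbf n|\,dS$.

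The load term is the main obstacle. Since $(\nabla u_j)$ is bounded in $L^p(\Omega;\mathbb R^{d\times d})$, Poincar\'e inequality furnishes $a_j\in\mathbb R^d$ such that $\tilde u_j:=u_j-a_j$ is bounded in $W^{2,p}(\Omega;\mathbb R^d)$, and hence in $W^{2,2\wedge p}(\Omega;\mathbb R^d)$; up to a subsequence $\tilde u_j\weakto u+c$ in $W^{2,p}(\Omega;\mathbb R^d)$ for some $c\in\mathbb R^d$, since matching the weak $W^{1,p}$ limits of the gradients forces $\nabla\tilde u_j\weakto\nabla u$. By \eqref{equi}, the constants $\mathbf R_j a_j$ and $\mathbf R c$ lie in the kernel of $\mathcal L$, so $\mathcal L(\mathbf R_j u_j)=\mathcal L(\mathbf R_j\tilde u_j)$ and $\mathcal L(\mathbf R(u+c))=\mathcal L(\mathbf R u)$. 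Writing $\mathbf R_j\tilde u_j-\mathbf R(u+c)=\mathbf R(\tilde u_j-u-c)+(\mathbf R_j-\mathbf R)\tilde u_j$, the first summand converges weakly to $0$ in $W^{2,2\wedge p}$ and the second strongly there (by $\mathbf R_j\to\mathbf R$ and boundedness of $\tilde u_j$), so continuity of $\mathcal L$ on $W^{2,2\wedge p}(\Omega;\mathbb R^d)$ yields $\mathcal L(\mathbf R_j u_j)\to\mathcal L(\mathbf R u)\le\max_{\mathbf R'\in\mathcal S^0_{\mathcal L}}\mathcal L(\mathbf R'u)$, since $\mathbf R\in\mathcal S^0_{\mathcal L}$. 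Assembling the four liminfs produces $\liminf_j\mathcal G_{\varepsilon_j}(v_j)\ge\overline{\mathcal G}(u)$; the delicate bookkeeping in this final step—reconciling the translational ambiguity in $u_j$ with the rotation $\mathbf R_j$ via \eqref{equi} before invoking weak continuity of $\mathcal L$—is the principal subtlety of the proof.
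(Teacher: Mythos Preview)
Your proof is correct and follows essentially the same approach as the paper: rewrite everything via frame indifference in terms of $u_j$, drop the nonnegative term $-\varepsilon_j^{-1}\mathcal L(\mathbf R_jx-x)$ using \eqref{compatibility}, handle the $W$-term by Taylor expansion on the good sets $\{\sqrt{\varepsilon_j}|\nabla u_j|<1\}$, the $H$-term by convexity, the surface term by Lemma~\ref{keylemma}, and the load term by subtracting a translation via Poincar\'e and \eqref{equi} before passing to the limit. The only minor difference is that for the $W$-term you directly invoke the compact embedding $W^{1,p}(\Omega)\hookrightarrow L^2(\Omega)$ to get strong $L^2$ convergence of $\nabla u_j$ (as in Lemma~\ref{liminf}), whereas the paper argues through weak $L^2$ convergence of $\mathbb 1_{T_j}\mathbb E(u_j)$ and weak lower semicontinuity of the quadratic form; your route is slightly more direct but equivalent.
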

 \begin{proof}
 Let 
$
T_j:=\{x\in\Omega: \sqrt{\eps_j}|\nabla u_j(x)|<1\},
$
so that 
\[
|\Omega\setminus T_j|\le \int_{\Omega\setminus T_j}\sqrt{\eps_j}|\nabla u_j|\le\sqrt{\eps_j}\int_\Omega|\nabla u_j|,
\]
thus $|\Omega\setminus T_j|\to 0$ as $j\to+\infty$. By repeating the argument in the proof of Lemma \ref{liminf}, for every $x\in T_j$ we have $\eps_j|\nabla u_j(x)|<\sqrt{\eps_j}$ and thus \eqref{modulus} holds for every large enough $j$, yielding
\[\begin{aligned}
&\liminf_{j\to+\infty}\frac1{\eps_j^2}\int_\Omega W(\mathbf I+\eps_j\nabla v_j)=\liminf_{j\to+\infty}\frac1{\eps_j^2}\int_\Omega W(\mathbf I+\eps_j\nabla u_j)\ge\liminf_{j\to+\infty}\frac1{\eps_j^2}\int_{T_j}W(\mathbf I+\eps_j\nabla u_j)\\&\qquad\ge \liminf_{j\to+\infty}\left(\frac12\int_{T_j}\mathbb E(u_j)\,D^2W(\mathbf I)\,\mathbb E(u_j)-\int_{T_j}\omega(\eps_j|\nabla u_j|)\,|\nabla u_j|^2\right)\\&\qquad \ge \liminf_{j\to+\infty}\left(\frac12\int_{\Omega}(\bb_{T_j}\mathbb E(u_j))\,D^2W(\mathbf I)\,(\bb_{T_j}\mathbb E(u_j))-\int_{T_j}\omega(\sqrt{\eps_j})|\nabla u_j|^2\right),
\end{aligned}\]
where the first equality is due to the frame indifference of $W$.
But $\omega(\sqrt{\eps_j})\to 0$ as $j\to+\infty$ and $(\nabla u_j)_{j\in\mathbb N}$ is bounded in $L^2(\Omega;\R^{d\times d})$ because of the  embedding of $W^{1,p}(\Omega)$ in $L^2(\Omega)$ yielding $\nabla u_j\to\nabla u $  \BBB in $L^2(\Omega;\R^{d\times d})$.
  Moreover,
for every $\eta\in L^2(\Omega;\R^{d\times d}) $ we have
\[
\left|\int_\Omega\eta: \bb_{\Omega\setminus T_j}\mathbb E(u_j)\right|\le \|\nabla u_j\|_{L^2(\Omega;\R^{d\times\d})}\left(\int_{\Omega\setminus T_j}|\eta|^2\right)^\frac12
\]
where the right hand side goes to zero as $j\to+\infty$ since $|\Omega\setminus T_j|\to 0$, so that  $\bb_{\Omega\setminus T_j}\mathbb E(u_j)\rightharpoonup 0$ weakly in $L^2(\Omega;\R^{d\times d})$,
and by writing
 $\bb_{T_j}\mathbb E(u_j)=\mathbb E(u_j)-\bb_{\Omega\setminus T_j}\mathbb E(u_j)$
 we see that  $\bb_{T_j}\mathbb E(u_j)\rightharpoonup \mathbb E(u)$
  weakly in $L^2(\Omega;\R^{d\times d})$. 
We conclude that 
\begin{equation}\label{bulk}
\liminf_{j\to+\infty}\frac1{\eps_j^2}\int_\Omega W(\mathbf I+\eps_j\nabla v_j)\ge  \frac12\int_{\Omega}\mathbb E(u)\,D^2W(\mathbf I)\,\mathbb E(u),
\end{equation}
thanks to the weak $L^2(\Omega;\R^{d\times d})$ semicontinuity of the map $\mathbf F\mapsto\int_\Omega \mathbf F^T\,D^2W(\mathbf I)\,\mathbf F$.

By the Sobolev embedding $W^{1,p}(\Omega;\R^{d\times d})\hookrightarrow L^2(\Omega;\R^{d\times d})$, holding since $p>d/2$, we get $\nabla u_j\rightharpoonup\nabla u$ weakly in $L^2(\Omega;\R^{d\times d})$.
By Poincar\'e inequality we deduce the existence of a sequence $(a_j)_{j\in\mathbb N}\subset\R^d$ and of $\bar u \in L^2(\Omega;\R^d)$ such that $u_j-a_j\rightharpoonup \bar u$ weakly in $L^2(\Omega;\R^{d\times d})$ and such that $\nabla \bar u=\nabla u$ on $\Omega$. Since   $\nabla^2 u_j\rightharpoonup \nabla ^2 u$ {weakly in $L^p(\Omega;\R^{d\times d\times d})$}, we deduce that  $u_j-a_j\rightharpoonup \bar u$ weakly in $W^{2,2\wedge p}(\Omega;\R^d)$. Therefore since \eqref{equi} holds and since $\mathcal L$ is a bounded linear functional over $W^{2,2\wedge p}(\Omega;\R^d)$, we get 
$
\mathcal L(u_j)=\mathcal L(u_j-a_j)\to\mathcal L(\bar u)=\mathcal L(u)
$ as $j\to+\infty$,
 and since $\mathbf R_j\to \mathbf R$, we obtain 
 \[
 \lim_{j\to+\infty} \mathcal L(\mathbf R_j u_j)=\lim_{j\to+\infty}\mathcal L(\mathbf R_j(u_j-a_j))=\mathcal L(\mathbf R \bar u)=\mathcal L(\mathbf R u).
 \]
 By taking \eqref{compatibility} into account we have
 \[
 -\mathcal L(v_j)=-\frac1{\eps_j}\mathcal L(\mathbf R_jx-x)-\mathcal L(\mathbf R_j u_j)\ge -\mathcal L(\mathbf R_j u_j)
 \]
 and therefore
 \begin{equation}\label{force}
 \liminf_{j\to+\infty} -\mathcal L(v_j)\ge -\mathcal L(\mathbf R u)\ge -\max_{\mathbf R\in\mathcal S^0_\mathcal L}\mathcal L(\mathbf R u).
 \end{equation}
 
Since $x+\eps_jv_j=\mathbf R_j(x+\eps_j u_j)$, and since $\cof(\mathbf R\mathbf F)=\mathbf R\,\cof\mathbf F$ for every $\mathbf R\in SO(d)$ and every $\mathbf F\in \R^{d\times d}$, we have
\[
\int_{\partial\Omega}||\cof(\mathbf I+\eps_j\nabla v_j)\,\mathbf n|-1|^q\,dS=\int_{\partial\Omega}||\cof(\mathbf I+\eps_j\nabla u_j)\,\mathbf n|-1|^q\,dS,
\] 
therefore Lemma \ref{keylemma} implies
\begin{equation}\label{superficie}
\AAA \liminf_{j\to+\infty}\frac1{\eps_j^q}\int_{\partial\Omega}||\cof(\mathbf I+\eps_j\nabla v_j)\,\mathbf n|-1|^q\,dS\ge \int_{\partial\Omega}|\mathbb A(u)\,\mathbf n\cdot\n|^q.\BBB
\end{equation}

The weak lower semicontinuity of the $L^p$ norm for the second gradient term, along with \eqref{bulk}, \eqref{force} and \eqref{superficie}, entails the desired result. 
\end{proof}
 
 \begin{lemma}\label{limsup2}  \AAA Let $p\ge dq/(q+1)$. \BBB Let $W$ satisfy \eqref{frame} and \eqref{coerc}.  Let $\mathcal L$ be a bounded linear functional over $W^{2,2\wedge p}(\Omega;\R^d)$ that satisfies \eqref{equi} and \eqref{compatibility}. Let $(\eps_j)_{j\in\mathbb N}\subset(0,1)$ be a vanishing sequence. For every $u\in W^{2,p}(\Omega;\R^d)$ there exist a sequence $(u_j)_{j\in\mathbb N}\subset W^{2,p}(\Omega;\R^d)$ and $\mathbf R_u\in\mathcal S^0_\mathcal L$ such that 
  \begin{equation}\label{twoconv}
  u_j\to  u \; \mbox{ strongly in $W^{2, p}(\Omega;\R^{d})$}\quad
 \end{equation}
and such that
\[
\limsup_{j\to+\infty}\mathcal G_{\eps_j}(v_j)\le\overline{\mathcal G}(u),
\]
where $v_j:=\mathbf R_u u_j+\dfrac1{\eps_j}(\mathbf R_ux-x)$.
  \end{lemma}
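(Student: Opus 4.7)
The plan is to reduce the computation of $\mathcal G_{\eps_j}(v_j)$ to that of $\mathcal G_{\eps_j}(u_j)$ via frame indifference, and then to apply the recovery construction of Lemma \ref{limsup} to the sequence $(u_j)_{j\in\mathbb N}$.

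First, I would choose $\mathbf R_u\in\mathcal S^0_{\mathcal L}$ as a maximizer of $\mathbf R\mapsto \mathcal L(\mathbf R u)$ on $\mathcal S^0_{\mathcal L}$: this set is closed in $SO(d)$ (the map $\mathbf R\mapsto\mathcal L(\mathbf Rx-x)$ is continuous on $SO(d)$ by \eqref{linearload}), hence compact, so the maximum is attained, giving $\mathcal L(\mathbf R_u u)=\max_{\mathbf R\in\mathcal S^0_{\mathcal L}}\mathcal L(\mathbf R u)$.

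Next, I would apply the density/truncation construction of Lemma \ref{limsup} to produce a sequence $(u_j)_{j\in\mathbb N}\subset C^\infty(\overline\Omega;\R^d)\cap W^{2,p}(\Omega;\R^d)$ with $u_j\to u$ strongly in $W^{2,p}(\Omega;\R^d)$ and
\[
\lim_{j\to+\infty}\eps_j\|\nabla u_j\|_{C^0(\overline\Omega;\R^{d\times d})}=0,
\]
which is exactly \eqref{twoconv} and the key estimate enabling the Taylor expansion \eqref{modulus}.

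Defining $v_j:=\mathbf R_u u_j+\tfrac1{\eps_j}(\mathbf R_u x-x)$, the crucial algebraic observation is
\[
\mathbf I+\eps_j\nabla v_j=\mathbf R_u(\mathbf I+\eps_j\nabla u_j),\qquad \nabla^2 v_j=\mathbf R_u\nabla^2 u_j.
\]
From frame indifference of $W$ in \eqref{frame}, frame indifference of $H$ in \eqref{H}, and the identity $\cof(\mathbf R\mathbf F)=\mathbf R\cof\mathbf F$ for $\mathbf R\in SO(d)$, I obtain pointwise
\[
W(\mathbf I+\eps_j\nabla v_j)=W(\mathbf I+\eps_j\nabla u_j),\quad H(\eps_j\nabla^2 v_j)=H(\eps_j\nabla^2 u_j),\quad |\cof(\mathbf I+\eps_j\nabla v_j)\n|=|\cof(\mathbf I+\eps_j\nabla u_j)\n|.
\]
For the load, since $\mathbf R_u\in\mathcal S^0_{\mathcal L}$ we have $\mathcal L(\mathbf R_u x-x)=0$, so $\mathcal L(v_j)=\mathcal L(\mathbf R_u u_j)$. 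Therefore
\[
\mathcal G_{\eps_j}(v_j)=\frac1{\eps_j^2}\!\int_\Omega \!W(\mathbf I+\eps_j\nabla u_j)+\int_\Omega \!H(\nabla^2 u_j)-\mathcal L(\mathbf R_u u_j)+\frac\gamma{\eps_j}\big\|\,|\cof(\mathbf I+\eps_j\nabla u_j)\n|-1\,\big\|_{L^1(\partial\Omega)}.
\]
Passing to the limit term by term is then exactly the content of Lemma \ref{limsup}: the bulk term tends to $\tfrac12\int_\Omega \mathbb E(u)D^2W(\mathbf I)\mathbb E(u)$ by the Taylor expansion \eqref{modulus} and the control $\eps_j\|\nabla u_j\|_{C^0}\to0$; the second-gradient term tends to $\int_\Omega H(\nabla^2 u)$ by continuity of $H$ under strong $W^{2,p}$ convergence (using \eqref{H}); the load term tends to $\mathcal L(\mathbf R_u u)$ by strong $W^{2,p}$ convergence and continuity of $\mathcal L$; the interfacial term tends to $\gamma\int_{\partial\Omega}|\mathbb A(u)\n\cdot\n|\,dS$ by Lemma \ref{keylemma}. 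Summing up and invoking the maximality of $\mathbf R_u$ gives exactly $\overline{\mathcal G}(u)$, with equality in the limsup.

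The construction is essentially routine given the previous lemmas; the only subtle point is the choice of $\mathbf R_u$, which is what forces the limsup to attain the value $\overline{\mathcal G}(u)$ rather than a generally larger quantity involving an arbitrary $\mathbf R\in\mathcal S^0_{\mathcal L}$. The main ``obstacle'', if any, is keeping track of the cancellation $\tfrac1{\eps_j}\mathcal L(\mathbf R_u x-x)=0$, which is precisely what the condition $\mathbf R_u\in\mathcal S^0_{\mathcal L}$ enforces and without which the load term would blow up.
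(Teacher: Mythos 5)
Your proof is correct and follows essentially the same route as the paper: choose a smooth approximating sequence with $\eps_j\|\nabla u_j\|_{C^0}\to 0$ as in Lemma~\ref{limsup}, pick $\mathbf R_u\in\mathcal S^0_{\mathcal L}$ maximizing $\mathcal L(\mathbf R u)$ (the paper equivalently minimizes an auxiliary quantity over $\mathcal S^0_\mathcal L$, which reduces to the same choice since the other terms are $\mathbf R$-independent), use frame indifference of $W$, $H$ and $\cof$ to equate the non-load terms at $u_j$ and $v_j$, exploit $\mathcal L(\mathbf R_u x - x)=0$ to cancel the $1/\eps_j$ contribution, and pass to the limit term by term. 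Your explicit note that $\mathcal S^0_{\mathcal L}$ is compact (so the maximizer exists) is a small but worthwhile addition that the paper leaves implicit.
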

  \begin{proof} Let $(u_j)_{j\in\mathbb N}\subset C^\infty(\overline\Omega;\R^d)\cap W^{2,p}(\Omega;\R^d)$ be a sequence that strongly converges to $u$ in $W^{2,p}(\Omega;\R^d)$. If $p<2$, we also have $\nabla u_j\to\nabla u$ strongly in $L^2(\Omega;\R^{d\times d})$ as $j\to+\infty$ by Sobolev embedding, since \AAA  $p\ge dq/(q+1)$. \BBB By the same argument of the proof of Lemma \ref{limsup}, it is possible to assume that $\eps_j\|\nabla u_j\|_{C^0(\overline\Omega;\R^{d\times d})}\to 0$ as $j\to+\infty$, therefore by repeating the arguments therein we get 
  \[
  \lim_{j\to+\infty}\frac1{\eps_j^2}\int_\Omega W(\mathbf I+\eps_j \nabla u_j)=\frac12\int_\Omega\mathbb E(u)\,D^2W(\mathbf I)\,\mathbb E(u).
  \] 
We have $\int_\Omega|\nabla^2u_j|^p\to \int_\Omega|\nabla^2u|^p$ as $j\to+\infty$, and moreover by Lemma \ref{keylemma} we have
\[
\lim_{j\to+\infty}\frac1{\eps_j^q}\int_{\partial\Omega}||\cof(\mathbf I+\eps_j\nabla u_j)\,\n|-1|^q\,dS=\int_{\partial\Omega}|\mathbb A(u)\,\n\cdot\n|^q\,dS.
\]  
  
Let now $\mathbf R_u$ be a minimizer of the function
\begin{equation*}\mathbf R\mapsto \frac12\int_\Omega\mathbb E(u)\,D^2W(\mathbf I)\,\mathbb E(u)+\int_\Omega H(\nabla^2 u)+\gamma\int_{\partial\Omega}|\mathbb A(u)\,\n\cdot\n|^q\,dS -\mathcal L(\mathbf R u)\end{equation*}
over $\mathcal S^0_\mathcal L$,
and let $v_j:=\mathbf R_uu_j+\frac1{\eps_j}(\mathbf R_ux-x)$, $j\in\mathbb N$. We notice that by frame indifference all the terms in $\mathcal G_{\eps_j}$, excluding the load term, are the same if evaluated at $u_j$ or $v_j$. In particular, we get
\[\begin{aligned}
&\limsup_{j\to+\infty}|\mathcal G_{\eps_j}(v_j)-\overline{\mathcal G}(u)|=\limsup_{j\to+\infty}|\frac{1}{\eps_j}\mathcal L(\mathbf R_ux-x)+\mathcal L(\mathbf R_uu_j)-\mathcal L(\mathbf R_uu)|\\&\qquad=\limsup_{j\to+\infty}|\mathcal L(\mathbf R_uu_j)-\mathcal L(\mathbf R_uu)|\\&\qquad\le \limsup_{j\to+\infty}  C_{\mathcal L}\left((1+\mathfrak c)\mathfrak K \, \|\nabla u_j-\nabla u\|_{L^{2\wedge p}(\Omega;\mathbb R^{d\times d})}+\|\nabla^2 u_j-\nabla^2 u\|_{L^{2\wedge p}(\Omega;\R^{d\times d\times d})}\right)=0,
\end{aligned}\]
having used $\mathbf R_u\in\mathcal S^0_\mathcal L$,  \eqref{kornpoincare} with $r=2\wedge p$, and \eqref{twoconv}. The proof is concluded.
\end{proof}

   \begin{proofad2}
    Let $(v_j)_{j\in\mathbb N}\subset W^{2,p}(\om,\mathbb R^d)$ be a sequence of minimizers of $\mathcal G_{\eps_j}$ over $W^{2,p}(\Omega;\R^d)$. Since $\mathcal G_{\eps_j}(0)=0$, we may assume wlog that $\mathcal G_{\eps_j}(v_j)\le 1$ for every $j\in\mathbb N$. By Lemma \ref{compactness2} and Lemma \ref{liminf2}, there exist $u_*\in W^{2,p}(\Omega;\R^d)$, $\mathbf R\in\mathcal S^0_\mathcal L$ and a sequence $(\mathbf R_j)_{j\in\mathbb N}\subset SO(d)$ such that, by possibly passing to a not relabeled subsequence, there  hold $\mathbf R_j\to\mathbf R$ and $\nabla u_j\to\nabla u_*$ weakly in $W^{1,p}(\Omega;\R^{d\times d})$ as $j\to+\infty$, where $
 u_j:=\mathbf R_j^T v_j+\frac{1}{\eps_j}(\mathbf R_j^Tx-x),
 $ 
 and
 \begin{equation}\label{down}
 \overline{\mathcal G}(u_*)\le\liminf_{j\to+\infty}\mathcal G_{\eps_j}(v_j).
 \end{equation}
  Let now $ \tilde u\in W^{2,p}(\Omega;\R^d)$. By Lemma \ref{limsup2},   there exist $\mathbf R_{\tilde u}\in\mathcal S^0_\mathcal L$ and a sequence $(\tilde u_j)_{j\in\mathbb N}\subset W^{2,p}(\Omega;\R^d)$ such that, letting $\tilde v_j:=\mathbf R_{\tilde u} \tilde u_j+\frac1{\eps_j}(\mathbf R_{\tilde u}x-x)$, there holds 
  \begin{equation}\label{up}
  \limsup_{j\to+\infty}\mathcal G_{\eps_j}(\tilde v_j)\le \overline{\mathcal G}(\tilde u).
  \end{equation}
   By combining \eqref{down} and \eqref{up}, since $(v_j)_{j\in\mathbb N}$ is a sequence of minimizers, we deduce
   \begin{equation}\label{upanddown}
    \overline{\mathcal G}(u_*)\le\liminf_{j\to+\infty}\mathcal G_{\eps_j}(v_j)\le  \limsup_{j\to+\infty}\mathcal G_{\eps_j}(\tilde v_j)\le \overline{\mathcal G}(\tilde u).
   \end{equation}
   Then, the  arbitrariness of $\tilde u$ shows that $u_*$ minimizes $\overline{\mathcal G}$ over $W^{2,p}(\Omega;\R^d)$, and choosing $\tilde u=u_*$ in \eqref{upanddown} we obtain \eqref{last}.
   \end{proofad2}
 
 \UUU
 \begin{proofad3} 
 The proof is the same as that of Theorem \ref{mainth1} and Theorem \ref{main2}. The only difference is indeed in the interface terms. However, the limiting behavior of the interface terms of functional $\mathcal F_\eps$ and $\mathcal I_\eps$ is given by Corollary \eqref{corolast}, which can be used in place of Lemma \ref{keylemma}. \AAA This shows the validity of the result for the Dirichlet problem. Concerning the pure traction problem, the proof is again the same for functional $\mathcal G_\eps$, taking also advantage of the frame indifference of the interfacial term therein the allows to perform the argument in the proof of Lemma \ref{limsup2}. \BBB
 \end{proofad3}\KKK
 
 \AAA 
 In order to prove Theorem \ref{main3}, we give a preliminary compactness result.

  \begin{lemma}\label{compactness3}  Let $p\ge{dq}/{(q+1)}$. If $q=1$ and $d=2$, let also $p>1$. Let $M>0$. Let $W$ satisfy \eqref{frame} and \eqref{dist}. Let $\mathcal L$ be a bounded linear functional over $W^{2,2\wedge p}(\Omega;\R^d)$ that satisfies \eqref{equi} and \eqref{compatibility}. Let $(\eps_j)_{j\in\mathbb N}\subset(0,1)$ be a vanishing sequence. Let $(v_j)_{j\in\mathbb N}\subset W^{2,p}(\Omega;\R^d)$ be a sequence such that $\mathcal I_{\eps_j}(v_j)\le M$ for every $j\in\mathbb N$. Then there exist 
   $v\in W^{2,p}(\Omega;\R^d)$ such that  there hold 
 $
\nabla v_j\rightharpoonup\nabla  v \;\; \mbox{ weakly in $W^{1, p}(\Omega;\R^{d\times d})$}
 $ in the limit as $j\to+\infty$ (possibly along a not relabeled subsequence).
  \end{lemma}
 \begin{proof}  By \eqref{rigidity} and  \eqref{dist} there exists a sequence $(\mathbf R_j)_{j\in\mathbb N}\subset SO(d)$ such that \eqref{userigidity} holds, where $u_j$ is defined by \eqref{UJ}. Therefore
 \[
 \begin{aligned}
& c\int_\Omega|\nabla u_j|^2+\int_\Omega H(\nabla^2 u_j)+\frac\gamma{\eps_j^q}\int_{\partial\Omega}|(\cof(\mathbf I+\eps_j\nabla v_j)-\mathbf I)\,\n|^q\,dS\\
&\qquad\le\frac1{\eps_j^2}\int_\Omega W(\mathbf I+\eps_j\nabla v_j)+\frac\gamma{\eps_j^p}\int_\Omega H(\eps_j\nabla^2 v_j)+\frac1{\eps_j^q}\int_{\partial\Omega}|(\cof(\mathbf I+\eps_j\nabla v_j)-\mathbf I)\,\n|^q\,dS\\
&\qquad\le \mathcal I_{\eps_j}(v_j)+\mathcal L(v_j)\le M+\frac1{\eps_j}\mathcal L(\mathbf R_jx-x)+\mathcal L(\mathbf R_ju_j)\le  M+\mathcal L(\mathbf R_ju_j)
 \end{aligned}
 \]
having used \eqref{compatibility}. 
As seen in the proof of Lemma \ref{compactness2}, this shows that there exists $u\in W^{2,p}(\Omega;\R^d)$ such that $\nabla u_j\rightharpoonup \nabla u$ weakly in $W^{1,p}(\Omega;\R^{d\times d})$ along a suitable not relabeled subsequence, and that
$$\frac1{\eps_j^q}\int_{\partial\Omega}|(\cof(\mathbf I+\eps_j\nabla v_j)-\mathbf I)\,\n|^q\,dS=\int_{\partial\Omega}\left|\frac{\mathbf R_j(\cof(\mathbf I+\eps_j\nabla u_j)-\mathbf I)\,\n}{\eps_j}+\frac{(\mathbf R_j-\mathbf I)\,\n}{\eps_j}\right|^q\,dS$$
is uniformly bounded w.r.t. $j$. However the sequence $(\eps_j^{-1}\mathbf R_j(\cof(\mathbf I+\eps_j\nabla u_j)-\mathbf I)\,\n)_{j\in\mathbb N}$ is uniformly bounded in $L^q(\partial\Omega;\mathbb R^{d})$ thanks to Corollary \ref{coro}. Therefore we deduce that
\begin{equation}\label{zn}
\sup_{j\in\mathbb N}\int_{\partial\Omega}\left|\frac{(\mathbf R_j-\mathbf I)}{\eps_j}\,\n\right|^q\,dS<+\infty.
\end{equation}
Let $\mathbf V_j:=\eps_j^{-1}(\mathbf R_j-\mathbf I)$ and $\mathbf Z_j:=\mathbf V_j/|\mathbf V_j|$. We claim that the sequence $(\mathbf V_j)_{j\in\mathbb N}\subset \mathbb R^{d\times d}$ is bounded. Indeed, suppose not. Then there exists a suitable subsequence along which $|\mathbf V_j|$ diverge  and $\mathbf Z_j\to\mathbf Z$ for some suitable $\mathbf Z\in \mathbb R^{d\times d}$ with $|\mathbf Z|=1$.
Thus 
\[
|\mathbf V_j|^{-q}\int_{\partial\Omega}|\mathbf V_j\,\n|^q\,dS=\int_{\partial\Omega}|\mathbf Z_j\,\n|^q\,dS\to\int_{\partial\Omega}|\mathbf Z\,\n|^q\,dS
\]
as $j\to+\infty$.
But $|\mathbf V_j|\to+\infty$ and \eqref{zn} then imply $ \int_{\partial\Omega}|\mathbf Z\,\n|^q\,dS=0$, which is a contradiction since $|\mathbf Z|=1$.
The claim is proven and it implies the result, since $\nabla u_j\rightharpoonup \nabla u$ weakly in $W^{1,p}(\Omega;\R^{d\times d})$ and since $u_j$ and $v_j$ are related by \eqref{UJ}.
 \end{proof}

\begin{proofad4} 
We preliminarily  notice that existence of minimizers over $W^{2,p}(\Omega;\R^d)$ of $\mathcal I_{\eps_j}$, for every fixed $j$, and of $\mathcal I_*$ are obtained in the same way as done in Lemma \ref{nonlineare} and Lemma \ref{barg}, respectively.

We first check Gamma liminf inequality, that is  we let $v\in W^{2,p}(\Omega;\R^d)$, we  let $(v_j)_{j\in\mathbb N}\subset W^{2,p}(\Omega;\R^d)$ be a  sequence such that
  $
 \nabla v_j\rightharpoonup\nabla  v$ weakly in $W^{1, p}(\Omega;\R^{d\times d})$ {as $j\to+\infty$}, 
and we check that
 \[
 {\mathcal I}_*(v)\le\liminf_{j\to+\infty}\mathcal I_{\eps_j}(v_j). 
 \]
 This is obtained in the very same way as in the proof of Lemma \ref{liminf2}. Indeed, after defining $
T_j:=\{x\in\Omega: \sqrt{\eps_j}|\nabla v_j(x)|<1\}
$
we follow the argument therein and obtain \eqref{bulk} with $v$ in place of $u$. Similarly by using Poincar\'e inequality and \eqref{equi} we get $\mathcal L(v_j)\to \mathcal L(v)$
 as $j\to+\infty$. We also have lower semicontinuity of $L^p$ norm of the second gradient, as well as lower semicontinuity of the interfacial term, thanks to Corollary \ref{coro}.

As second step we check Gamma limsup inequality, that is we check that for every $v\in W^{2,p}(\Omega;\R^d)$ there exists a sequence $(v_j)_{j\in\mathbb N}\subset W^{2,p}(\Omega;\R^d)$  such that 
  $  v_j\to  v \; \mbox{ strongly in $W^{2, p}(\Omega;\R^{d})$}$
and such that
\[
\limsup_{j\to+\infty}\mathcal I_{\eps_j}(v_j)\le{\mathcal I}_*(v).
\]
 The argument is the same that was used for proving Lemma \ref{limsup2}. Let $(v_j)_{j\in\mathbb N}\subset C^\infty(\overline\Omega;\R^d)\cap W^{2,p}(\Omega;\R^d)$ be a sequence that strongly converges to $v$ in $W^{2,p}(\Omega;\R^d)$, constructed as in proof of Lemma \ref{limsup}. Thus we get continuity along this sequence of all the terms in the energy but the load term, also using Corollary \ref{coro}.
 Concerning the load term, we directly get $\mathcal L(v_j)\to\mathcal L(v)$ by means of \eqref{kornpoincare}.
 
 Having proven compactness and Gamma convergence, the result follows.
 \end{proofad4}

 \BBB

\section*{Acknowledgements}  \noindent
\AAB We are grateful to G.~Francfort for bringing reference \cite{Dias} to our attention and for providing valuable feedback on the earlier version of this manuscript. \BBB The work of MK was consecutively  supported  by the GA\v{C}R projects 21-06569K and 23-04766S Partial support from the M\v{S}MT \v{C}R project ERC CZ  No. LL2310 is also acknowledged. He is indebted to  Dipartimento di Ingegneria Meccanica, Energetica, Gestionale e dei Trasporti of the University of Genoa for hospitality and support during his stay there. EM is supported by the MIUR-PRIN project 202244A7YL.
He acknowledges hospitality of the Institute of Information Theory and Automation of the
	Czech Academy of Sciences where part of this research was developed.
\begin{flushleft}

\end{flushleft}

\end{document}